\DeclareMathOperator{\supp}{supp}
\newtheorem{theorem}{Theorem}
\newtheorem{corollary}[theorem]{Corollary}
\newtheorem{lemma}[theorem]{Lemma}
\newtheorem{proposition}[theorem]{Proposition}
\newenvironment{proof}[1][Proof]{\noindent\textbf{#1:} }{\hfill \rule{0.5em}{0.5em}}
\begin{document}

\title{Generic properties of Mañé's set of exact magnetic Lagrangians}
\author{Alexandre Rocha}
\maketitle

\begin{abstract}
Let $M$ be a closed manifold and $L$ an exact magnetic Lagrangian. In this
paper we proved that there exists a residual $\mathcal{G}$ of $H^{1}\left( M;%
\mathbb{R}\right)$ such that the pro-perty:
\begin{equation*}
{\widetilde{\mathcal{M}}}\left( c\right) ={\widetilde{\mathcal{A}}}\left(
c\right) ={\widetilde{\mathcal{N}}}\left( c\right), \forall c\in \mathcal{G}
\end{equation*}
with ${\widetilde{\mathcal{M}}}\left( c\right)$ supports on a uniquely
ergodic measure, is generic in the family of exact magnetic Lagrangians.

We also prove that, fixed the cohomology class $c$, there exists a residual
set of exact magnetic Lagrangians such that when this unique measure is
supported on a periodic orbit, this orbit is hyperbolic and its stable and
unstable manifolds intersect transversally. This result is a version of
Theorem D of \cite{gon5} for the exact magnetic Lagrangian case.
\end{abstract}

\section{Introduction}

Let $M$ be a closed manifold equipped with a Riemannian metric $%
g=\left\langle .,.\right\rangle $. A Lagrangian $L:TM\rightarrow \mathbb{R}$
is called exact magnetic Lagrangian if
\begin{equation*}
L\left( x,v\right) =\frac{\left\Vert v\right\Vert ^{2}}{2}+\eta _{x}\left(
v\right)
\end{equation*}%
for some non-closed 1-form $\eta _{x}$. The Euler-Lagrangian flow of this
Lagrangian can also be obtained as magnetic flow associated to an exact
2-form $\Omega =-d\eta _{x}.$

This type of Lagrangian fits into Mather's theory, as developed by R.
Ma\~n\'e and A. Fathi, about Tonelli Lagrangians, namely, it is fiberwise
convex and superlinear.

Let $\mathfrak{M}\left( L\right) $ be the set of action minimizing measures.
Recall that $\mathfrak{M}\left( L\right) $ is the set of $\mu $ Borel
probability measures on $TM$ which are invariant under the Euler-Lagrange
flow $\varphi _{t}$ generated by $L$ and minimizes the \textit{action}.

Since the Euler Lagrange flow generated by $L$ does not change by adding a
closed one form $\zeta $, we also consider the action minimizing measures $%
\mathfrak{M}\left( L-\zeta \right) $. The minimal action value, depends only
on the cohomology class $c=[\zeta ]\in H^{1}(M,{\mathbb{R}})$ of the closed
one form, so it is denoted by $-\alpha (c)$. It is known that $\alpha (c)$
is the energy level that contains the \textit{Mather set for the cohomology
class }$c$:%
\begin{equation*}
{\widetilde{\mathcal{M}}}_{c}\left( L\right) =\overline{\bigcup_{\mu \in
\mathfrak{M}(L-\zeta )}\supp(\mu )}.
\end{equation*}%
${\widetilde{\mathcal{M}}}_{c}\left( L\right) $ is a compact invariant set
which is a graph over a compact subset $\mathcal{M}_{c}\left( L\right) $ of $%
M$, the projected Mather set (see \cite{mat1}). $\mathcal{M}_{c}\left(
L\right) $ is laminated by curves, which are global (or time independent)
minimizers. Mather also proved that the function $c\mapsto \alpha (c)$ is
convex and superlinear.

In general, ${\widetilde{\mathcal{M}}}_{c}\left( L\right) $ is contained in
another compact invariant set, which is also a graph whose projection is
laminated by global minimizers: the \textit{Aubry set for the cohomology
class c}, denoted by $\widetilde{\mathcal{A}}_{c}\left( L\right) $.

In order to state our results, we need to introduce the Aubry set and the Mañ%
é set for a general Tonelli Lagrangian:

Let $\xi $ be a closed one form representative of the cohomology class $c=%
\left[ \xi \right] .$ The \textit{action} of a $C^{1}$ curve $\gamma
:[0,T]\rightarrow M$ is defined by

\begin{equation*}
A_{c}^{L+k}\left( \gamma \right) =\int_{0}^{T}[L(\gamma ,\dot{\gamma})-\xi
(\gamma )(\dot{\gamma})+k]dt
\end{equation*}%
where $k$ is a real number. The energy level $c\left( L-\xi \right) ,$
namely Mañé's critical value of the Lagrangian $L-c,$ which depends only on
the cohomology class $c=[\xi ]$, may be characterized in several ways. $%
c\left( L-\xi \right) $ is defined by Mañé as the infimum of the numbers $k$
such that the action $A_{c}^{L+k}\left( \delta \right) $ is nonnegative for
every closed curve $\delta :\left[ 0,T\right] \rightarrow M.$

Recall that, for a given real number $k$ the action potential $\Phi
_{c}^{L+k}:M\times M\rightarrow
\mathbb{R}
$ is defined by%
\begin{equation*}
\Phi _{c}^{L+k}\left( x,y\right) =\inf A_{c}^{L+k}\left( \gamma \right)
\end{equation*}%
infimum taken over the curves $\gamma $ joining $x$ the $y$.

Mañé proved that $c\left( L-\xi \right) =-\alpha (\left[ \xi \right]
)=-\alpha (c),$ and that $\alpha (c)$ is the smallest number such that the
action potential is finite. In other words, if $k<\alpha (c)$, then $\Phi
_{L-\xi +k}(x,y)=-\infty $ and if $k\geq \alpha (c)$, $\Phi _{L-\xi
+k}(x,y)\in {\mathbb{R}}$. We introduce the following notations: $\Phi
_{c}^{L}:=\Phi _{c}^{L+\alpha (c)}$ and $A_{c}^{L}:=A_{c}^{L+\alpha (c)}$.

Observe that by Tonelli's Therorem (See for example in \cite{gon7}), for
fixed $t>0$, there always exists a minimizing extremal curve connecting $x$
to $y$ in time $t$. The potential calculates the global (or time
independent) infimum of the action. This value may not be realized by a
curve.

The potential $\Phi _{c}^{L}$ is not symmetric in general but%
\begin{equation*}
\delta _{M}\left( x,y\right) =\Phi _{c}^{L}\left( x,y\right) +\Phi
_{c}^{L}\left( y,x\right)
\end{equation*}%
is a pseudo-metric. A curve $\gamma :%
\mathbb{R}
\rightarrow M$ is called \textit{semistatic} if minimizes action between any
of its points:%
\begin{equation*}
A_{c}^{L}\left( \gamma |_{\left[ a,b\right] }\right) =\Phi _{c}^{L}\left(
\gamma \left( a\right) ,\gamma \left( b\right) \right) ,
\end{equation*}%
and $\gamma $ is called \textit{static} if is semistatic and $\delta
_{M}\left( \gamma \left( a\right) ,\gamma \left( b\right) \right) =0$ for
any $a,b\in
\mathbb{R}
.$

Actually, the orbits contained in the Mather set $\widetilde{\mathcal{M}}%
_{c}\left( L\right) $ project onto static curves. The Mañé set $\widetilde{%
\mathcal{N}}_{c}\left( L\right) $ is the set of the points $\left(
x,v\right) \in TM$ such that the projection $\gamma \left( t\right) =\pi
\circ \varphi _{t}\left( x,v\right) $ is a semistatic curve and the Aubry
set $\widetilde{\mathcal{A}}_{c}\left( L\right) $ is the set of the points $%
\left( x,v\right) \in TM$ such that the projection $\gamma \left( t\right)
=\pi \circ \varphi _{t}\left( x,v\right) $ is a static curve.

Mañé proved that $\widetilde{\mathcal{A}}_{c}\left( L\right) $ is chain
recurrent and it is a challenging question to describe the dynamics of the
Euler-Lagrange flow restricted to $\widetilde{\mathcal{A}}_{c}\left(
L\right) $. All these properties are proven in \cite{gon7}. We introduce the
following notations: $\mathcal{A}_{c}\left( L\right) $ and $\mathcal{N}%
_{c}\left( L\right) $ to represent the projected Aubry and Mañé sets of
Lagrangian $L-c,$ respectively.

The notion of genericity in the context of Lagrangian systems is provided by
Mañé in \cite{man2}. The idea is to make special perturbations by adding a
potential: $L(x,v)+\Psi (x)$, for $\Psi \in C^{\infty }(M)$. A property is
\textit{generic} in the sense of Mañé if it is valid for every Lagrangians $%
L(x,v)+\Phi (x)$ with $\Phi $ contained in a residual subset $\mathcal{O}$.
In this sense, G. Contreras and G. Paternain, prove in \cite{gon1} (Theorem
C) that for a fixed cohomology class $c,$ the property
\begin{equation}
{\widetilde{\mathcal{M}}}_{c}\left( L\right) ={\widetilde{\mathcal{A}}}%
_{c}\left( L\right) ={\widetilde{\mathcal{N}}}_{c}\left( L\right) ,
\label{eq11}
\end{equation}%
with ${\widetilde{\mathcal{M}}}_{c}\left( L\right) $ supports a uniquely
ergodic measure is generic. Furthermore, J. Zhang proves in \cite{zhang}
that for generic Tonelli Lagrangian, there exists a residual set $\mathcal{%
G\subset }H^{1}\left( M;%
\mathbb{R}
\right) $ such that \ref{eq11} holds for any $c\in \mathcal{G}$.

In general, when we are dealing with a specific class of Lagrangians,
perturbations by adding a potential are not allowed. However the main goal
of this paper is to prove some generic properties of minimizing sets for the
family of exact magnetic Lagrangians.

In the work of Miranda, J. A. G \cite{jan1}, the perturbations of a magnetic
flow are made on the space of 2-forms on $M$ with the $C^{\infty }$ topology
that preserve the cohomology class (in particular on the exact 2-forms). He
proves a version of the Kupka-Smale Theorem for this class of flows, when $M$
is a surface. More recently, Arbieto, A. and Castro, F. \cite{arb1}
generalize for any dimension of $M$.

Let us take $\Gamma ^{\infty }\left( M\right) $ the set of smooth 1-forms on
$M\ $endowed with the metric%
\begin{equation}
d\left( \omega _{1},\omega _{2}\right) =\sum_{k\in \mathbb{N}}\frac{\arctan
\left( \left\Vert \omega _{1}-\omega _{2}\right\Vert _{k}\right) }{2^{k}},
\label{d1}
\end{equation}%
denoting by $\left\Vert \omega \right\Vert _{k}$ the $C^{k}$-norm of the
1-form $\omega ,$ that is: given a $C^{\infty \text{ }}$1-form $\omega ,$ we
associate a $C^{\infty \text{ }}$ field $X$ on $M$ such that $\omega
_{x}\left( v\right) =\left\langle X\left( x\right) ,v\right\rangle .$ We
define the $C^{k}$-norm $\left\Vert \omega \right\Vert _{k}$ of $\omega $ as
the $C^{k}$-norm of $X.$

The a first integral of the flow $\varphi _{t}$ is the \textit{energy
function }$E:TM\rightarrow
\mathbb{R}
$, defined by%
\begin{equation*}
E\left( x,v\right) =\frac{\partial L}{\partial v}\left( x,v\right) \left(
v\right) -L\left( x,v\right) .
\end{equation*}%
Recall that%
\begin{equation*}
e_{0}=-\min_{x\in M}L\left( x,0\right) =\max_{x\in M}E\left( x,0\right) .
\end{equation*}%
We have $\min \alpha \geq e_{0}$ and for any $k>e_{0},$ the energy level $%
E^{-1}\left( k\right) $ is a hypersurface of $TM.$ Let state our main result.

\begin{theorem}
\label{teorema1}For an exact magnetic Lagrangian $L$ defined on $TM$, there
exist a residual set $\mathcal{G\subset }H^{1}\left( M;%
\mathbb{R}
\right) $ and a residual $\mathcal{O\subset }\Gamma ^{\infty }\left(
M\right) $ such that%
\begin{equation*}
{\widetilde{\mathcal{M}}}_{c}\left( L+\varphi \right) ={\widetilde{\mathcal{A%
}}}_{c}\left( L+\varphi \right) ={\widetilde{\mathcal{N}}}_{c}\left(
L+\varphi \right) ,\forall c\in \mathcal{G},\forall \varphi \in \mathcal{O}%
\text{,}
\end{equation*}%
with ${\widetilde{\mathcal{M}}}_{c}\left( L+\varphi \right) $ supports a
uniquely ergodic measure.
\end{theorem}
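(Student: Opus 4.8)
The plan is to reduce the two parameters $c$ and $\varphi$ to a single 1-form and then port the two genericity results already in the literature---Theorem C of Contreras--Paternain \cite{gon1} at a fixed class, and Zhang's theorem \cite{zhang} over the cohomology---to the exact magnetic setting. The key algebraic observation is that, for a closed representative $\xi_c$ of $c$,
\begin{equation*}
A_c^{L+\varphi}(\gamma)=\int_0^T\big[L(\gamma,\dot\gamma)+(\varphi-\xi_c)(\gamma)(\dot\gamma)+\alpha(c)\big]\,dt,
\end{equation*}
so the $c$-minimizers of $L+\varphi$ are exactly the $0$-minimizers of $L+\psi$ with $\psi:=\varphi-\xi_c$. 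Consequently $\widetilde{\mathcal{M}}_c(L+\varphi)=\widetilde{\mathcal{M}}_0(L+\psi)$, and the identical statement holds for the Aubry and Ma\~n\'e sets. Since $\Gamma^\infty(M)$ with the metric $(\ref{d1})$ is a complete metric space and $H^1(M;\mathbb{R})$ is finite dimensional, both are Baire spaces, and the property to be proved for $(c,\varphi)$ becomes property $(\ref{eq11})$ with unique ergodicity for the single perturbation $\psi$ at the fixed class $0$.

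First I would establish the fixed-class statement: the set $\mathcal{R}_0$ of $\psi\in\Gamma^\infty(M)$ for which $L+\psi$ satisfies $(\ref{eq11})$ at $c=0$ with $\widetilde{\mathcal{M}}_0$ uniquely ergodic is residual. Writing $\mathcal{R}_0=\bigcap_n U_n$, each $U_n$ encodes on scale $1/n$ the two requirements $\widetilde{\mathcal{N}}_0\subset\widetilde{\mathcal{A}}_0\subset\widetilde{\mathcal{M}}_0$ and the uniqueness of the minimizing measure. The $U_n$ are open by the (upper) semicontinuous dependence on $\psi$ of the Mather, Aubry and Ma\~n\'e sets, of the set of minimizing measures, and of the action potential $\Phi_0^{L+\psi}$. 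Their density is the substantive point and is where the argument of \cite{gon1} has to be rebuilt, the one essential change being that the admissible perturbations are now 1-forms $\psi$---equivalently, exact modifications $-d\psi$ of the magnetic field---rather than potentials.

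For the cohomology direction I would use that, for each fixed $\varphi$, the Mather function $c\mapsto\alpha(c)$ of $L+\varphi$ is convex and superlinear, hence differentiable off a meager set, and that at a point of differentiability all $c$-minimizing measures share a single rotation vector; this is exactly the mechanism through which Zhang \cite{zhang} passes from a fixed class to a residual set of classes. Concretely, I would fix a countable dense set $D=\{c_i\}\subset H^1(M;\mathbb{R})$, apply the fixed-class step to each $c_i$ (via $\psi=\varphi-\xi_{c_i}$) to obtain residual sets $\mathcal{O}_i\subset\Gamma^\infty(M)$, and set $\mathcal{O}=\bigcap_i\mathcal{O}_i$, which is again residual. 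For every $\varphi\in\mathcal{O}$ the property then holds on the dense set $D$, and combining this with the upper semicontinuity of $c\mapsto\widetilde{\mathcal{N}}_c(L+\varphi)$ and the differentiability of $\alpha$ promotes $D$ to a residual set $\mathcal{G}\subset H^1(M;\mathbb{R})$ on which the three sets coincide and the minimizing measure remains unique, yielding the required $\mathcal{G}$ and $\mathcal{O}$.

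The main obstacle is the density step of the second paragraph. In \cite{gon1} one may add an arbitrary potential $\Psi(x)$, acting directly on the Lagrangian; here one may only add a 1-form, which perturbs the magnetic term and changes the 2-form by the \emph{exact} form $-d\psi$. I must show that this smaller family still produces enough transversality to (i) detach $\widetilde{\mathcal{N}}_0$ from $\widetilde{\mathcal{A}}_0$ and $\widetilde{\mathcal{A}}_0$ from $\widetilde{\mathcal{M}}_0$, and (ii) destroy any nontrivial convex combination of minimizing measures, thereby forcing unique ergodicity. This is the exact-magnetic analogue of the flexibility exploited in the magnetic Kupka--Smale theorems of Miranda \cite{jan1} and Arbieto--Castro \cite{arb1}, whose local 1-form perturbations along semistatic orbits I would use to realize the needed modifications; a secondary point is to carry out the promotion in the third paragraph uniformly in $\varphi\in\mathcal{O}$.
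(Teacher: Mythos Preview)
Your reduction $\psi=\varphi-\xi_c$ is sound but buys little: the paper works directly with the pair $(\xi,c)\in\Gamma^\infty(M)\times H^1(M;\mathbb{R})$ throughout and never needs this repackaging. The overall architecture you propose---density at each class of a countable dense $\{c_n\}$, then promotion to a residual $\mathcal{G}$---matches the paper's, but both of your two main steps diverge from what the paper actually does, and in the first case your proposed mechanism is the wrong one.

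\textbf{The density step.} You identify this as the obstacle and suggest Kupka--Smale--type local 1-form perturbations along semistatic orbits. That is not how the paper proceeds, and it is not clear such transversality arguments would collapse $\widetilde{\mathcal{N}}$ onto $\widetilde{\mathcal{M}}$. The paper's substitute for the potential perturbation of \cite{gon1} is an \emph{explicit} 1-form, built as follows (Lemma~\ref{lema1} and Proposition~\ref{proposicao1}). Using $\alpha(c)>e_0$, which for exact magnetic Lagrangians always holds by \cite{pat1}, and the graph property of $\widetilde{\mathcal{A}}_c$, one finds a smooth field $X$ with $\bigl(L_v(x,X(x))-L_v(x,0)\bigr)(v)\geq K>0$ on a neighborhood $\mathcal{U}$ of $\widetilde{\mathcal{A}}_c$. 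One then sets $\eta_x(v)=\lambda(x)\bigl(L_v(x,X(x))-L_v(x,0)\bigr)(v)$ with $\lambda\geq 0$ vanishing exactly on $\mathcal{M}_c$. Starting from the unique ergodicity already provided by \cite{car3}, this $\eta$ leaves $\widetilde{\mathcal{M}}_c$ unchanged while making any putative static curve outside it strictly more expensive, forcing $\widetilde{\mathcal{M}}_c=\widetilde{\mathcal{A}}_c=\widetilde{\mathcal{N}}_c$. This is a variational construction, not a transversality one; the energy gap $\alpha(c)-e_0>0$ is what makes 1-forms an adequate replacement for potentials, and your plan misses this entirely.

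\textbf{The promotion step.} Your appeal to differentiability of $\alpha$ \`a la Zhang is a genuinely different route from the paper's, and as stated it is incomplete: differentiability of $\alpha$ at $c$ forces a common rotation vector for all $c$-minimizing measures, but by itself neither yields unique ergodicity nor $\widetilde{\mathcal{M}}_c=\widetilde{\mathcal{N}}_c$. The paper instead proves upper-semicontinuity of $(\xi,c)\mapsto\widetilde{\mathcal{N}}_c(L+\xi)$ (Proposition~\ref{propa1}, via continuity of $(\xi,c)\mapsto\alpha_{L+\xi}(c)$) together with a complementary estimate showing $\widetilde{\mathcal{M}}_c(L+\xi)$ stays $1/r$-close to $\widetilde{\mathcal{M}}_{c_n}(L+\sigma)$ for $(\xi,c)$ near $(\sigma,c_n)$. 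From these two-sided controls one builds open sets $A_{n,r}(\sigma)$ in the product, takes $\mathcal{O}=\bigcap_{n,r}\pi_1\!\bigl(\bigcup_\sigma A_{n,r}(\sigma)\bigr)$ and $\mathcal{G}=\bigcap_r\bigcup_n\pi_2\!\bigl(\bigcup_\sigma A_{n,r}(\sigma)\bigr)$, and shows by a direct triangle-inequality argument that every point of $\widetilde{\mathcal{N}}_c(L+\xi)$ lies in $\widetilde{\mathcal{M}}_c(L+\xi)$ for $(\xi,c)\in\mathcal{O}\times\mathcal{G}$. No convexity or differentiability of $\alpha$ enters.
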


Actually, before we will prove the following proposition which allows us to
perturb Tonelli Lagrangians by adding 1-forms.

\begin{proposition}
\label{proposicao1}Let $L$ be a Tonelli Lagrangian and $c$ be a cohomology
class with $\alpha \left( c\right) >e_{0}.$ If ${\widetilde{\mathcal{M}}}%
_{c}\left( L\right) =supp\left( \mu _{0}\right) $ supports on a uniquely
ergodic measure $\mu _{0},$ then there exists a $C^{\infty }$ 1-form $\eta $
(sufficiently close to zero) such that%
\begin{equation*}
{\widetilde{\mathcal{M}}}_{c}\left( L+\eta \right) ={\widetilde{\mathcal{A}}}%
_{c}\left( L+\eta \right) ={\widetilde{\mathcal{N}}}_{c}\left( L+\eta
\right) ={\widetilde{\mathcal{M}}}_{c}\left( L\right) .
\end{equation*}
\end{proposition}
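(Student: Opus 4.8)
The plan is to transpose the potential-perturbation argument of Contreras and Paternain (Theorem C of \cite{gon1}) to the setting where the only admissible perturbations are additions of smooth $1$-forms. Throughout I would use the always-valid inclusions $\widetilde{\mathcal{M}}_c(L)\subseteq\widetilde{\mathcal{A}}_c(L)\subseteq\widetilde{\mathcal{N}}_c(L)$, so the content of the statement is twofold: first, to produce a small $1$-form $\eta$ for which the Mather set, the measure $\mu_0$ and the value $\alpha(c)$ are all preserved; second, to arrange that no static or semistatic curve of $L+\eta$ survives outside $\widetilde{\mathcal{M}}_c(L)$, which forces $\widetilde{\mathcal{N}}_c(L+\eta)$ down onto $\widetilde{\mathcal{M}}_c(L)$.

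The hypothesis $\alpha(c)>e_0$ is the structural ingredient that makes $1$-form perturbations as flexible as potentials here. Since $e_0=\max_x E(x,0)$ and $\widetilde{\mathcal{M}}_c(L)$ lies in the energy level $\alpha(c)$, every $(x,v)\in\widetilde{\mathcal{M}}_c(L)$ has $v\neq 0$, and by compactness $\|v\|$ is bounded below on $\widetilde{\mathcal{M}}_c(L)$. First I would build $\eta$ so that it vanishes on the Mather set in the strong sense that $\eta_x(v)=0$ for $(x,v)\in\widetilde{\mathcal{M}}_c(L)$ and $(d\eta)_x=0$ for $x\in\mathcal{M}_c(L)$. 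Because a $1$-form is linear in $v$ it does not alter the energy function, and because the Lorentz force $(d\eta)_x(\cdot,v)$ vanishes along the projected Mather curves, the Euler--Lagrange flow of $L+\eta$ coincides with that of $L$ on $\widetilde{\mathcal{M}}_c(L)$. Hence $\widetilde{\mathcal{M}}_c(L)$ remains invariant, $\mu_0$ remains invariant with $\int\eta\,d\mu_0=0$, so $\mu_0$ is still minimizing and $\alpha_{L+\eta}(c)=\alpha(c)$.

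To see that the Mather set does not grow, I would combine unique ergodicity with the upper semicontinuity of $\widetilde{\mathcal{M}}_c$ under perturbations of the Lagrangian: for $\eta$ small enough every minimizing measure of $L+\eta$ is supported in a small neighborhood of $\supp(\mu_0)$, while $\mu_0$ itself is minimizing with unchanged action; unique ergodicity of $\mu_0$ on $\widetilde{\mathcal{M}}_c(L)$ then prevents any distinct minimizer from appearing and yields $\widetilde{\mathcal{M}}_c(L+\eta)=\supp(\mu_0)=\widetilde{\mathcal{M}}_c(L)$. The remaining, and genuinely delicate, step is to choose the non-closed part of $\eta$ so that the perturbed Ma\~n\'e pseudo-metric satisfies $\delta_M^{L+\eta}(x,y)>0$ whenever $y\notin\mathcal{M}_c(L)$, which eliminates all static classes other than the one carrying $\mu_0$ and, together with the chain recurrence of the Aubry set, also forces every semistatic orbit into $\widetilde{\mathcal{M}}_c(L)$.

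This last step is where I expect the main obstacle. Unlike a potential, the action increment $\int_\gamma\eta$ produced by a $1$-form is not sign-definite, so one cannot simply make it nonnegative off the Mather set. Instead I would exploit $d\eta$: arranging $d\eta$ to be suitably signed on a neighborhood of any point of $\mathcal{A}_c(L)\setminus\mathcal{M}_c(L)$, and using that static and semistatic orbits are chain recurrent, I would close such an orbit up with arbitrarily small error and evaluate the gained action by Stokes' theorem on a spanning surface, the sign of $d\eta$ making this gain strictly positive. The technical heart is to control the enclosed area uniformly against the available action budget governed by $\alpha(c)$ and the cost of closing up -- using the lower bound on $\|v\|$ coming from $\alpha(c)>e_0$ -- so that the penalization survives in the limit while $\eta$ is kept arbitrarily $C^{\infty}$-small.
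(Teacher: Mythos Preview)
Your overall architecture is right, but the ``genuinely delicate'' step is where the proposal goes wrong. You assert that ``the action increment $\int_\gamma\eta$ produced by a $1$-form is not sign-definite, so one cannot simply make it nonnegative off the Mather set,'' and then you reach for a Stokes-theorem argument that you yourself flag as fragile. This premise is false, and it is exactly the point where the paper's construction succeeds.

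The paper's key observation (its Lemma~\ref{lema1}) is that one \emph{can} choose $\eta$ so that $\eta_x(v)\geq 0$ pointwise for every $(x,v)$ in a neighborhood $\mathcal{U}$ of $\widetilde{\mathcal{A}}_c(L)$, with strict positivity off $\mathcal{M}_c(L)$. The trick is to align $\eta_x$ with the (Lipschitz) graph section $\xi$ of the Aubry set: set $\eta_x(v)=\lambda(x)\bigl(L_v(x,X(x))-L_v(x,0)\bigr)(v)$, where $X$ is a smooth approximation of $\xi$ and $\lambda\geq 0$ is a bump vanishing exactly on $\mathcal{M}_c(L)$. The hypothesis $\alpha(c)>e_0$ enters through the identity
\[
\bigl(L_v(x,\xi(x))-L_v(x,0)\bigr)\bigl(\xi(x)\bigr)=E(x,\xi(x))+L_0(x,\xi(x))+L(x,0)\geq \alpha(c)-e_0>0,
\]
which gives a uniform lower bound $K>0$ on $\mathcal{U}$. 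This is the correct role of $\alpha(c)>e_0$; it is not merely the qualitative fact that $v\neq 0$ on the Mather set.

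With this $\eta$ in hand, the rest is much simpler than your outline. Upper semicontinuity of the Ma\~n\'e set (not just the Mather set) in the $1$-form forces $\widetilde{\mathcal{N}}_c(L+\eta)\subset\mathcal{U}$, so every minimizing measure $\mu$ of $L+\eta$ satisfies $\int\eta\,d\mu\geq 0$ while $\int\eta\,d\mu_0=0$; comparing actions immediately gives $\mathfrak{M}_c(L+\eta)=\{\mu_0\}$ and $\alpha_{L+\eta}(c)=\alpha_L(c)$. For $\widetilde{\mathcal{A}}_c(L+\eta)=\widetilde{\mathcal{M}}_c(L+\eta)$, any static curve $\gamma$ through a point outside $\mathcal{M}_c(L)$ has $\alpha$- and $\omega$-limits in $\supp(\mu_0)$, and the pointwise bound yields $\int_{-s_n}^{t_n}\eta_{\gamma}(\dot\gamma)\,dt\geq aK>0$, which contradicts $\delta_M^{L+\eta}(\gamma(-s_n),\gamma(t_n))=0$ after comparing $\Phi_c^{L+\eta}$ with $\Phi_c^{L}$ at the endpoints (they agree on $\supp(\mu_0)$). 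No spanning surfaces, no control of enclosed area, and no $d\eta$ are needed.
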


In Section \ref{section2} we shall present a version for the family of exact
magnetic Lagrangians of a theorem stated in \cite{man3} by Mañé, proved in (%
\cite{gon5}, Theorem D).

\begin{theorem}
\label{teorema2}Let $L$ be an exact magnetic Lagrangian. Then there exists a
residual set $\mathcal{O}$ of $\Gamma ^{\infty }\left( M\right) $ such that
for every $\omega \in \mathcal{O}$ the Lagrangian $L+\omega $ has a unique
minimizing measure, uniquely ergodic $\mu _{\omega }.$ Moreover, every
periodic orbit $\Gamma $ which is the support of $\mu _{\omega },$ for some $%
\omega \in \mathcal{O}$, is hyperbolic and its stable and unstable manifolds
intersect transversally $W^{s}\left( \Gamma \right) \pitchfork W^{u}\left(
\Gamma \right) $.
\end{theorem}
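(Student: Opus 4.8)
The plan is to prove Theorem~\ref{teorema2} by combining the first part of the statement --- existence of a residual set on which the minimizing measure is unique and uniquely ergodic --- with a perturbative argument that upgrades periodic supports to hyperbolic transversal ones. For the first part, I would invoke Theorem~\ref{teorema1} together with Proposition~\ref{proposicao1}: for a \emph{fixed} cohomology class (one may take $c=0$ after absorbing a closed representative into $L$), the residual set $\mathcal{O}\subset\Gamma^\infty(M)$ already furnishes, for each $\omega\in\mathcal{O}$, the coincidence ${\widetilde{\mathcal{M}}}(L+\omega)={\widetilde{\mathcal{A}}}(L+\omega)={\widetilde{\mathcal{N}}}(L+\omega)$ supporting a uniquely ergodic measure $\mu_\omega$. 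The uniqueness of the \emph{minimizing} measure is the generic statement that the Mather set supports a single ergodic measure, which is precisely what the uniquely-ergodic hypothesis encodes.

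For the hyperbolicity-and-transversality part, the strategy is the standard Contreras--Paternain scheme adapted to one-form perturbations, following the proof of Theorem~D in \cite{gon5}. First I would enumerate the periodic orbits that can arise as supports: the set of homotopy (or homology) classes is countable, and for each class $g$ and each period-window the collection of candidate periodic minimizing orbits forms a countable family, so it suffices to show that for each such candidate the ``bad'' set of one-forms --- those for which $\Gamma$ is the support but fails to be hyperbolic, or has a non-transversal homoclinic intersection --- is contained in a closed set with empty interior, and then intersect the resulting residual sets over the countable family. The key perturbative input is a local version of the Kupka--Smale theorem for magnetic flows: by \cite{jan1} and its higher-dimensional generalization \cite{arb1}, perturbing inside the space of exact two-forms (equivalently, adding $d\eta$ by adding the one-form $\eta$) one can make every periodic orbit hyperbolic and all stable/unstable intersections transversal, while keeping the perturbation small enough that the minimizing measure remains supported on $\Gamma$.

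The main obstacle, as in \cite{gon5}, is coupling the two requirements: the Kupka--Smale perturbations act on the \emph{dynamics} on a fixed energy level, whereas hyperbolicity must be established for the orbit that \emph{persists as the minimizing one} after perturbation. Concretely, one must ensure that a one-form perturbation $\eta$ making $\Gamma$ hyperbolic does not destroy the property that $\mu_\omega$ is the unique minimizing measure supported on $\Gamma$. The way to control this is to use that a periodic minimizing orbit lying in the Aubry set is \emph{isolated} and its support is a single closed curve; by Proposition~\ref{proposicao1} one already has the rigidity ${\widetilde{\mathcal{M}}}_c={\widetilde{\mathcal{A}}}_c={\widetilde{\mathcal{N}}}_c$ after a preliminary one-form perturbation, so the minimizing set is exactly $\Gamma$ and is locally maximal. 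Then a sufficiently $C^\infty$-small, \emph{localized} perturbation $\eta$ supported near $\Gamma$ changes neither the action of $\Gamma$ at leading order nor the minimality comparison with nearby orbits (whose actions are bounded away by a definite gap coming from local maximality of the minimizing measure), so $\Gamma$ persists as the unique minimizer while becoming hyperbolic with transversal invariant manifolds.

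Finally I would assemble the residual set: let $\mathcal{O}_1$ be the residual set from the uniqueness/unique-ergodicity part and, for the $n$-th periodic candidate $\Gamma_n$, let $\mathcal{O}_2^{(n)}$ be the (open and dense, hence residual) set of one-forms for which $\Gamma_n$, \emph{if} it is the minimizing support, is hyperbolic with $W^s(\Gamma_n)\pitchfork W^u(\Gamma_n)$; openness comes from persistence of hyperbolic sets and of transversal intersections, density from the localized perturbation above. Then $\mathcal{O}=\mathcal{O}_1\cap\bigcap_{n}\mathcal{O}_2^{(n)}$ is residual in $\Gamma^\infty(M)$ (a Baire space under the metric $d$ of \eqref{d1}), and for $\omega\in\mathcal{O}$ the Lagrangian $L+\omega$ has the desired properties. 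The delicate point worth spelling out in full is the openness of transversality, where I expect the $\lambda$-lemma together with continuous dependence of the invariant manifolds of the (now persistent) hyperbolic periodic orbit on the one-form parameter to do the work.
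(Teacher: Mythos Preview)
Your outline diverges from the paper's argument in a way that introduces a real gap. The central problem is your countable enumeration of ``periodic candidates'' $\{\Gamma_n\}$: there is no fixed, $\omega$-independent countable family of periodic orbits to intersect over. The periodic orbit supporting $\mu_\omega$ varies continuously with $\omega$ (it is an orbit of the Euler--Lagrange flow of $L+\omega$), so for different $\omega$ in an open set you get an uncountable continuum of distinct $\Gamma$'s, not a countable list indexed by homology class and period. Consequently your sets $\mathcal{O}_2^{(n)}$ are not well defined, and the Baire intersection $\bigcap_n\mathcal{O}_2^{(n)}$ does not make sense. The paper avoids this entirely: it does \emph{not} enumerate orbits but instead partitions the residual set $\mathcal{O}$ into $\mathcal{A}$ (those $\omega$ whose minimizing measure is periodic) and $\mathcal{B}=\mathcal{O}\setminus\mathcal{A}$, and shows that the subset $\mathcal{A}_1\subset\mathcal{A}$ where the orbit is hyperbolic with transversal invariant manifolds is relatively open and dense in $\mathcal{A}$; the required residual set is then $\mathcal{A}_1\cup\mathcal{B}$.

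The second gap is the perturbation step itself. Invoking the Kupka--Smale theorem for magnetic flows (\cite{jan1},\cite{arb1}) gives you generic hyperbolicity of \emph{all} closed orbits on a fixed energy level, but it gives no control over which orbit remains the \emph{minimizing} one after perturbation, and your ``definite gap'' argument is not correct: an arbitrary localized perturbation supported near $\Gamma$ will in general change the action of $\Gamma$ and may shift the minimum to a nearby orbit. The paper's mechanism is quite different and specific: it reuses the very 1-form $\eta_x(v)=\lambda(x)\bigl(L_v(x,X(x))-L_v(x,0)\bigr)(v)$ from Proposition~\ref{proposicao1}, with $\lambda$ chosen to vanish on $\pi(\Gamma)$ and to be a positive quadratic transverse to it, so that (i) the action of $\Gamma$ is unchanged and every other invariant measure has strictly larger action (hence $\Gamma$ remains the unique minimizer), and (ii) an explicit computation of the second derivatives of $\tilde L=L+\eta$ along $\Gamma$ shows the index form increases by a definite amount on transverse Jacobi fields, forcing hyperbolicity. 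This is Proposition~\ref{propostion a2}, and it is the key ingredient you are missing. A minor additional point: for the first part you should invoke Theorem~\ref{teo2} at $c=0$ directly, not Theorem~\ref{teorema1}, since the latter only guarantees the property for $c$ in a residual $\mathcal{G}\subset H^1(M;\mathbb{R})$ which need not contain $0$.
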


\section{Perturbation of a Tonelli Lagrangian by adding a 1-form}

In this section, we will dedicate to prove Proposition \ref{proposicao1} and
Theorem \ref{teorema1}. In order to prove the results we need of the
following lemma:

\begin{lemma}
\label{lema1}Let $L:TM\rightarrow
\mathbb{R}
$ be a Tonelli Lagrangian and $c$ be a cohomology class with $\alpha \left(
c\right) >e_{0}.$ Then there exist a neighborhood $\mathcal{U\subset }$ $TM$
of ${\widetilde{\mathcal{A}}}\left( c\right) $, a $C^{\infty }$ vector field
$X:M\rightarrow TM$ and $K>0$ such that for every $\left( x,v\right) \in
\mathcal{U}$,
\begin{equation*}
\left( \frac{\partial L}{\partial v}\left( x,X\left( x\right) \right) -\frac{%
\partial L}{\partial v}\left( x,0\right) \right) \left( v\right) \geq K>0.
\end{equation*}
\end{lemma}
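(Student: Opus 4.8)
The plan is to take $X$ to be a smooth approximation of the velocity field carried by the Aubry set, and then to combine strict fiberwise convexity of $L$ with the hypothesis $\alpha \left( c\right) >e_{0}$ to obtain the desired positivity first on ${\widetilde{\mathcal{A}}}\left( c\right) $ itself and then, by continuity, on a whole neighborhood. First I would record that the velocities along the Aubry set are bounded away from zero. Since ${\widetilde{\mathcal{A}}}\left( c\right) \subset E^{-1}\left( \alpha \left( c\right) \right) $ and $E\left( x,0\right) \leq e_{0}<\alpha \left( c\right) $, the continuity of $E$ together with the compactness of ${\widetilde{\mathcal{A}}}\left( c\right) $ gives a constant $\rho >0$ with $\left\Vert v\right\Vert \geq \rho $ for every $\left( x,v\right) \in {\widetilde{\mathcal{A}}}\left( c\right) $. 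Because ${\widetilde{\mathcal{A}}}\left( c\right) $ is a Lipschitz graph over the projected Aubry set $\mathcal{A}\left( c\right) $ (Mather's graph theorem), there is a continuous velocity field $v_{A}:\mathcal{A}\left( c\right) \rightarrow TM$ with $\left( x,v_{A}\left( x\right) \right) \in {\widetilde{\mathcal{A}}}\left( c\right) $ and $\left\Vert v_{A}\left( x\right) \right\Vert \geq \rho $.

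Next I would produce the pointwise positivity on the graph. By strict fiberwise convexity, for $x\in \mathcal{A}\left( c\right) $ one writes
\begin{equation*}
\left( \frac{\partial L}{\partial v}\left( x,v_{A}\left( x\right) \right) -\frac{\partial L}{\partial v}\left( x,0\right) \right) \left( v_{A}\left( x\right) \right) =\int_{0}^{1}\frac{\partial ^{2}L}{\partial v^{2}}\left( x,s\,v_{A}\left( x\right) \right) \left( v_{A}\left( x\right) ,v_{A}\left( x\right) \right) ds .
\end{equation*}
On the compact set $\left\{ \left( x,s\,v_{A}\left( x\right) \right) :x\in \mathcal{A}\left( c\right) ,\ s\in \left[ 0,1\right] \right\} $ the Hessian $\partial ^{2}L/\partial v^{2}$ has a uniform positive lower eigenvalue $\lambda >0$, so the right-hand side is $\geq \lambda \rho ^{2}=:3K>0$ for every $x\in \mathcal{A}\left( c\right) $. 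I would then replace $v_{A}$ by a genuine $C^{\infty }$ field: extend $v_{A}$ continuously to a neighborhood of $\mathcal{A}\left( c\right) $ in $M$ (Tietze) and mollify in charts to obtain $X\in C^{\infty }$ with $\sup_{\mathcal{A}\left( c\right) }\left\Vert X-v_{A}\right\Vert $ as small as desired. Since the expression above depends continuously on the first slot and only on the value $X\left( x\right) $, not on its derivatives, a sufficiently fine approximation yields $\left( \frac{\partial L}{\partial v}\left( x,X\left( x\right) \right) -\frac{\partial L}{\partial v}\left( x,0\right) \right) \left( v_{A}\left( x\right) \right) \geq 2K$ for all $x\in \mathcal{A}\left( c\right) $.

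Finally I would pass to a neighborhood in $TM$. Setting $\Psi \left( x,v\right) =\left( \frac{\partial L}{\partial v}\left( x,X\left( x\right) \right) -\frac{\partial L}{\partial v}\left( x,0\right) \right) \left( v\right) $, a continuous function on $TM$, the previous step says $\Psi \geq 2K$ on the compact graph ${\widetilde{\mathcal{A}}}\left( c\right) =\left\{ \left( x,v_{A}\left( x\right) \right) \right\} $. Continuity of $\Psi $ then furnishes an open neighborhood $\mathcal{U}\subset TM$ of ${\widetilde{\mathcal{A}}}\left( c\right) $ on which $\Psi \geq K$, which is precisely the asserted inequality.

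I expect the main obstacle to be the smoothing step, that is, turning the merely Lipschitz, and a priori only partially defined, Aubry velocity field $v_{A}$ into a global $C^{\infty }$ vector field $X$ while preserving the strict inequality. What makes it go through is that the inequality involves only the zeroth-order value $X\left( x\right) $ paired against $v_{A}\left( x\right) $, so a $C^{0}$-approximation suffices and no control of the derivatives of $X$ is required; the graph property is what guarantees a single target direction $v_{A}\left( x\right) $ over each base point, and the hypothesis $\alpha \left( c\right) >e_{0}$ is exactly what keeps that direction uniformly away from the origin, so that strict convexity delivers a uniform positive gap.
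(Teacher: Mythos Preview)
Your proposal is correct and follows the same overall architecture as the paper: take the Lipschitz velocity field on the Aubry set coming from the graph property, establish strict positivity of the pairing on ${\widetilde{\mathcal{A}}}(c)$, replace the field by a $C^{\infty}$ approximation (the paper invokes Whitney's theorem, you use Tietze plus mollification), and finally enlarge to a neighborhood by continuity.

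The one substantive difference is in how the positivity on ${\widetilde{\mathcal{A}}}(c)$ is obtained. You bound $\|v_A\|\geq\rho$ and then appeal to a uniform lower bound $\lambda$ on the fiberwise Hessian to get the constant $\lambda\rho^{2}$. The paper instead performs an exact algebraic rewriting: for $(x,v)\in{\widetilde{\mathcal{A}}}(c)$ it shows
\[
\left(L_v(x,v)-L_v(x,0)\right)(v)=E(x,v)+L_0(x,v)+L(x,0)\geq \alpha(c)-e_0,
\]
where $L_0(x,v)=L(x,v)-L_v(x,0)(v)-L(x,0)\geq 0$ by convexity. This yields the explicit gap $\alpha(c)-e_0$ directly from the hypothesis, without having to quantify either $\rho$ or the Hessian eigenvalue. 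Your route is slightly more elementary (no energy identity needed), while the paper's gives a cleaner, hypothesis-tracking constant; both are perfectly valid.
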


\begin{proof}
$\ $Since $\left( \pi |_{{\mathcal{A}}\left( c\right) }\right) ^{-1}$ is a
Lipschitz map we can consider $\xi :M\rightarrow TM$ a Lipschitz extension
of $\left( \pi |_{{\mathcal{A}}\left( c\right) }\right) ^{-1}$ to $M.$ We
define a Lipschitz map $F:TM\rightarrow
\mathbb{R}
$ by $F\left( x,v\right) =\left( \frac{\partial L}{\partial v}\left( x,\xi
\left( x\right) \right) -\frac{\partial L}{\partial v}\left( x,0\right)
\right) \left( v\right) .$ Let us prove first that $L_{0}\left( x,v\right)
=L\left( x,v\right) -\frac{\partial L}{\partial v}\left( x,0\right) \left(
v\right) -L\left( x,0\right) \geq 0.$ Indeed, observe that $L_{0}$ is a
convex superlinear function, $L_{0}\left( x,0\right) =0$ and $\frac{\partial
L_{0}}{\partial v}\left( x,0\right) =0.$ Therefore $L_{0}\left( x,\star
\right) $ has its minimum at $v=0$, hence $L_{0}\left( x,v\right) \geq 0$.
Now let us take $\left( x,v\right) \in {\widetilde{\mathcal{A}}}\left(
c\right) ,$ i.e. $v=\xi \left( x\right) .$ Then%
\begin{eqnarray*}
F\left( x,v\right) &=&\left( \frac{\partial L}{\partial v}\left( x,\xi
\left( x\right) \right) -\frac{\partial L}{\partial v}\left( x,0\right)
\right) \left( \xi \left( x\right) \right) \\
&=&\frac{\partial L}{\partial v}\left( x,\xi \left( x\right) \right) \left(
\xi \left( x\right) \right) -L\left( x,\xi \left( x\right) \right) +L\left(
x,\xi \left( x\right) \right) -\frac{\partial L}{\partial v}\left(
x,0\right) \left( \xi \left( x\right) \right) \\
&=&E\left( x,\xi \left( x\right) \right) +\left[ L\left( x,\xi \left(
x\right) \right) -\frac{\partial L}{\partial v}\left( x,0\right) \left( \xi
\left( x\right) \right) -L\left( x,0\right) \right] +L\left( x,0\right) \\
&=&\alpha \left( c\right) +L_{0}\left( x,\xi \left( x\right) \right)
+L\left( x,0\right) \geq \alpha \left( c\right) -e_{0}>0\text{,}
\end{eqnarray*}%
because $e_{0}=-\min_{x\in M}L\left( x,0\right) $.

By the continuity of $F$ we obtain a neighborhood $\mathcal{U\subset }$ $TM$
of ${\widetilde{\mathcal{A}}}\left( c\right) $ such that $F|_{{\mathcal{U}}}>%
\frac{c_{0}-e_{0}}{2}>0.$ Since ${\widetilde{\mathcal{A}}}\left( c\right)
\subset \left\{ \left( x,v\right) \in TM:\left\Vert v\right\Vert \leq
B\right\} $ for some $B>0,$ we can suppose that $\mathcal{U\subset }\left\{
\left( x,v\right) \in TM:\left\Vert v\right\Vert <D\right\} $ for some $D>0.$
By Whitney's approximation Theorem, given $\delta >0$ there exists a $%
C^{\infty }$ map $X:M\rightarrow TM$ with%
\begin{equation*}
\left\Vert X\left( x\right) -\xi \left( x\right) \right\Vert <\delta ,\text{
}\forall x\in M.\text{ }
\end{equation*}%
It follows from continuity of $L_{v}$ taking $\delta >0$ smaller if
necessary, we have%
\begin{equation*}
\left\Vert \frac{\partial L}{\partial v}\left( x,X\left( x\right) \right) -%
\frac{\partial L}{\partial v}\left( x,\xi \left( x\right) \right)
\right\Vert <\frac{\alpha \left( c\right) -e_{0}}{4D},\text{ }\forall x\in M.
\end{equation*}%
Therefore, if $\left( x,v\right) \in \mathcal{U}$,
\begin{eqnarray*}
\left\vert F\left( x,v\right) -\left( \frac{\partial L}{\partial v}\left(
x,X\left( x\right) \right) -\frac{\partial L}{\partial v}\left( x,0\right)
\right) \left( v\right) \right\vert &=&\left\vert \left( \frac{\partial L}{%
\partial v}\left( x,\xi \left( x\right) \right) -\frac{\partial L}{\partial v%
}\left( x,X\left( x\right) \right) \right) \left( v\right) \right\vert \\
&\leq &\left\Vert \frac{\partial L}{\partial v}\left( x,X\left( x\right)
\right) -\frac{\partial L}{\partial v}\left( x,\xi \left( x\right) \right)
\right\Vert \left\Vert v\right\Vert \\
&<&\frac{\alpha \left( c\right) -e_{0}}{4D}.D=\frac{\alpha \left( c\right)
-e_{0}}{4}.
\end{eqnarray*}%
and%
\begin{equation*}
\left( \frac{\partial L}{\partial v}\left( x,X\left( x\right) \right) -\frac{%
\partial L}{\partial v}\left( x,0\right) \right) v>F\left( x,v\right) -\frac{%
\alpha \left( c\right) -e_{0}}{4}>\frac{\alpha \left( c\right) -e_{0}}{4}%
\overset{def}{=}K>0.
\end{equation*}
\end{proof}

The next step is to show the upper-semicontinuity of the Mañé set for
Tonelli Lagranginas when we add a 1-form:

\begin{proposition}
\label{propa1}Let $L:TM\rightarrow
\mathbb{R}
$ be a Tonelli Lagrangian. As a set-valued function, $\left( \xi ,c\right)
\in \Gamma ^{\infty }\left( M\right) \times H^{1}\left( M;%
\mathbb{R}
\right) \longmapsto {\widetilde{\mathcal{N}}}_{c}\left( L+\xi \right) $ is
upper-semicontinuous, that is given a neighborhood $\mathcal{V}$ of ${%
\widetilde{\mathcal{N}}}_{c_{0}}\left( L+\xi _{0}\right) $ in $TM$ there
exists a neighborhood $U\times V$ of $\left( \xi _{0},c_{0}\right) $ in $%
\Gamma ^{\infty }\left( M\right) \times H^{1}\left( M;%
\mathbb{R}
\right) $ such that ${\widetilde{\mathcal{N}}}_{c}\left( L+\xi \right)
\subset $ $\mathcal{V}$ for every $\left( \xi ,c\right) \in U\times V$.
\end{proposition}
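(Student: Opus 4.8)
The plan is to prove the upper-semicontinuity by contradiction, reducing everything to a compactness argument combined with the upper-semicontinuity of the action potential. Suppose the statement fails: there are a neighborhood $\mathcal{V}$ of $\widetilde{\mathcal{N}}_{c_0}(L+\xi_0)$, sequences $\xi_n\to\xi_0$ in $\Gamma^{\infty}(M)$ and $c_n\to c_0$ in $H^{1}(M;\mathbb{R})$, and points $(x_n,v_n)\in\widetilde{\mathcal{N}}_{c_n}(L+\xi_n)$ with $(x_n,v_n)\notin\mathcal{V}$ for every $n$. I choose closed representatives $\zeta_n$ of $c_n$ depending continuously on $c_n$ (for instance the harmonic ones), so that $\zeta_n\to\zeta_0$ in every $C^{k}$-norm.

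The first observation is that adding a $1$-form does not alter the energy: since $\xi_x(v)$ is fiberwise linear, Euler's relation gives $E_{L+\xi}=E_L=E$ for every $\xi$. As every semistatic orbit for $(L+\xi_n,c_n)$ lies in the level $E^{-1}(\alpha_{L+\xi_n}(c_n))$, and the critical values $\alpha_{L+\xi_n}(c_n)$ stay bounded (by continuity of the critical value in $(\xi,c)$), the superlinearity of $L$ makes bounded-energy sets compact, so all the $(x_n,v_n)$ lie in one fixed compact subset of $TM$. Passing to a subsequence, $(x_n,v_n)\to(x_*,v_*)$, with $(x_*,v_*)\notin\mathcal{V}$ because $TM\setminus\mathcal{V}$ is closed. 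Moreover, as $\xi_n\to\xi_0$ in $C^{2}$, the Euler--Lagrange fields converge in $C^{1}$, so by continuous dependence of solutions the orbits $\gamma_n(t)=\pi\circ\varphi^{n}_{t}(x_n,v_n)$ converge to $\gamma_*(t)=\pi\circ\varphi_t(x_*,v_*)$ in $C^{1}$, uniformly on each compact time interval.

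The heart of the argument is to show that $\gamma_*$ is semistatic for $(L+\xi_0,c_0)$, which forces $(x_*,v_*)\in\widetilde{\mathcal{N}}_{c_0}(L+\xi_0)\subset\mathcal{V}$ and gives the contradiction. Fix $a<b$. Semistaticity of $\gamma_n$ gives $A_{c_n}^{L+\xi_n}(\gamma_n|_{[a,b]})=\Phi_{c_n}^{L+\xi_n}(\gamma_n(a),\gamma_n(b))$. The left-hand side converges to $A_{c_0}^{L+\xi_0}(\gamma_*|_{[a,b]})$, using the $C^{1}$ convergence of the curves together with $\xi_n\to\xi_0$, $\zeta_n\to\zeta_0$ and continuity of $\alpha$. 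For the right-hand side I establish the upper-semicontinuity of the potential: whenever $y_n\to y_*$ and $z_n\to z_*$, then $\limsup_n\Phi_{c_n}^{L+\xi_n}(y_n,z_n)\le\Phi_{c_0}^{L+\xi_0}(y_*,z_*)$. This comes from a test-curve construction: given $\varepsilon>0$, pick $\sigma$ joining $y_*$ to $z_*$ with $A_{c_0}^{L+\xi_0}(\sigma)<\Phi_{c_0}^{L+\xi_0}(y_*,z_*)+\varepsilon$, splice in short arcs from $y_n$ to $y_*$ and from $z_*$ to $z_n$ whose action tends to $0$, and let $n\to\infty$, so that the action of the spliced curve under $(L+\xi_n,c_n)$ tends to $A_{c_0}^{L+\xi_0}(\sigma)$. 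Since the two sides of the displayed identity coincide for each $n$, I obtain $A_{c_0}^{L+\xi_0}(\gamma_*|_{[a,b]})\le\Phi_{c_0}^{L+\xi_0}(\gamma_*(a),\gamma_*(b))$; the reverse inequality is automatic from the definition of $\Phi$ as an infimum, so equality holds for all $a<b$, i.e.\ $\gamma_*$ is semistatic.

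The main obstacle is precisely the limit in the potential in the last step. It rests on two ingredients to be made precise: the joint continuity of the critical value $\alpha_{L+\xi}(c)$ in $(\xi,c)$ — needed both for the $\alpha$-shift inside $A_c^{L+\xi}$ and for controlling the fixed energy level — and a uniform bound on the action of the short connecting arcs between nearby points (which simultaneously yields the convergence of the action of $\sigma$ under $(L+\xi_n,c_n)$). Continuity of $\alpha$ in $c$ is Mather's convexity and superlinearity, and continuity under the $1$-form perturbation follows from its variational characterization; the control on short arcs is routine once all the relevant data are confined to a fixed compact set. Everything else in the argument is soft.
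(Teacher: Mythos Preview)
Your argument is correct and follows essentially the same route as the paper: a contradiction argument, compactness via the invariance of the energy under $1$-form perturbations together with the joint continuity of $\alpha_{L+\xi}(c)$ (proved separately in the paper as a lemma), convergence of the Euler--Lagrange orbits on compact time intervals, and finally semistaticity of the limit curve by comparing the limiting action with the potential through a test-curve--plus--short-arcs estimate. The only differences are cosmetic: the paper phrases the last step as an explicit contradiction (assuming $\Phi_{c_0}^{L+\xi_0}(x_0,y_0)<\Delta-\varepsilon$) rather than as a general upper-semicontinuity statement for $\Phi$, and obtains orbit convergence via bounded-action compactness rather than continuous dependence of ODE solutions, but the content is the same.
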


Before proving this proposition, we shall prove the Mather's $\alpha -$%
function depends continuously of $\left( \xi ,c\right) \in \Gamma ^{\infty
}\left( M\right) \times H^{1}\left( M;%
\mathbb{R}
\right) $. We will prove that it holds for any Tonelli Lagrangians.

\begin{lemma}
\label{lema2}Let $L:TM\rightarrow
\mathbb{R}
$ be a Tonelli Lagrangian. The map $\left( \xi ,\lambda \right) \in \Gamma
^{\infty }\left( M\right) \times H^{1}\left( M;%
\mathbb{R}
\right) \longmapsto \alpha _{L+\xi }\left( \lambda \right) =c\left( L+\xi
-\lambda \right) $ is continuous.
\end{lemma}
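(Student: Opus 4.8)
The plan is to exploit Ma\~n\'e's variational characterization of $\alpha $ in terms of \emph{holonomic} (closed) probability measures, whose crucial feature is that the set over which one minimizes does not depend on the Lagrangian. Recall that if $\mathcal{H}$ denotes the set of Borel probability measures $\mu $ on $TM$ with $\int \left\Vert v\right\Vert d\mu <\infty $ and $\int df\,d\mu =0$ for every $f\in C^{\infty }\left( M\right) $, then
\begin{equation*}
-\alpha _{L+\xi }\left( \lambda \right) =\min_{\mu \in \mathcal{H}}\int_{TM}\left( L+\xi -\omega _{\lambda }\right) d\mu ,
\end{equation*}
where $\omega _{\lambda }$ is any closed $1$-form representing $\lambda $. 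I would first fix, once and for all, a bounded linear section $\lambda \mapsto \omega _{\lambda }$ of $H^{1}\left( M;\mathbb{R}\right) $ into the space of closed $1$-forms (for instance the harmonic representatives); since $H^{1}\left( M;\mathbb{R}\right) $ is finite dimensional this section is Lipschitz, so $\left\Vert \omega _{\lambda }-\omega _{\lambda _{0}}\right\Vert _{0}\leq C\left\vert \lambda -\lambda _{0}\right\vert $. The point of the holonomic formulation is that, for each fixed $\mu \in \mathcal{H}$, the map $\left( \xi ,\lambda \right) \mapsto \int \left( L+\xi -\omega _{\lambda }\right) d\mu $ is affine and continuous, so that $\alpha _{L+\xi }\left( \lambda \right) $ is automatically a supremum of affine functions, hence lower semicontinuous; the whole difficulty is to promote this to genuine continuity, which requires a uniform control of the minimizing measures.

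The second and central step is an a priori bound, uniform on a neighbourhood $\mathcal{K}$ of a fixed $\left( \xi _{0},\lambda _{0}\right) $, on the first moment $\int \left\Vert v\right\Vert d\mu $ of the minimizing measures. Here I would use only the superlinearity of $L$, which is uniform in $x$ because $M$ is compact: given $A>0$ there is $B\left( A\right) $ with $L\left( x,v\right) \geq A\left\Vert v\right\Vert -B\left( A\right) $. Choosing $A=1+\sup_{\mathcal{K}}\left\Vert \xi -\omega _{\lambda }\right\Vert _{0}$ gives the pointwise estimate $L+\xi -\omega _{\lambda }\geq \left\Vert v\right\Vert -B$ on $TM$, valid uniformly over $\mathcal{K}$; integrating against any $\mu $ shows $\alpha _{L+\xi }\left( \lambda \right) \leq B$. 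For the opposite inequality I would evaluate the functional on one fixed compactly supported test measure $\mu _{0}\in \mathcal{H}$, whose action depends continuously on $\left( \xi ,\lambda \right) $ and is therefore bounded below by some $-C$ on $\mathcal{K}$, giving $\alpha _{L+\xi }\left( \lambda \right) \geq -C$. Combining these with the pointwise bound, if $\mu ^{\ast }$ realizes the minimum for $\left( \xi ,\lambda \right) \in \mathcal{K}$ then $\int \left\Vert v\right\Vert d\mu ^{\ast }\leq \int \left( L+\xi -\omega _{\lambda }+B\right) d\mu ^{\ast }=-\alpha _{L+\xi }\left( \lambda \right) +B\leq C+B=:R$, a bound independent of $\left( \xi ,\lambda \right) \in \mathcal{K}$.

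The final step is the comparison argument. Set $\beta =\left( \xi -\xi _{0}\right) -\left( \omega _{\lambda }-\omega _{\lambda _{0}}\right) $, a $1$-form with $\left\Vert \beta \right\Vert _{0}\leq \left\Vert \xi -\xi _{0}\right\Vert _{0}+C\left\vert \lambda -\lambda _{0}\right\vert \to 0$ as $\left( \xi ,\lambda \right) \to \left( \xi _{0},\lambda _{0}\right) $ (convergence in the metric $d$ controls the $C^{0}$-norm). Feeding the minimizer for $\left( \xi _{0},\lambda _{0}\right) $ into the formula for $\left( \xi ,\lambda \right) $ and conversely, and using $\left\vert \int \beta \,d\mu \right\vert \leq \left\Vert \beta \right\Vert _{0}\int \left\Vert v\right\Vert d\mu \leq \left\Vert \beta \right\Vert _{0}R$ for both minimizers, I would obtain $\left\vert \alpha _{L+\xi }\left( \lambda \right) -\alpha _{L+\xi _{0}}\left( \lambda _{0}\right) \right\vert \leq \left\Vert \beta \right\Vert _{0}R$, which tends to $0$ and proves joint continuity at $\left( \xi _{0},\lambda _{0}\right) $. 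The main obstacle is precisely the uniform first-moment bound $R$ of the second step: without it the minimizing measures could lose mass to infinity in the fibres and only semicontinuity would survive. It is worth noting that this is exactly the place where one uses that the perturbation is by a $1$-form (linear in $v$), so that $\beta $ is dominated by $\left\Vert v\right\Vert $ and the superlinear term of $L$ keeps the minimizers in a region of uniformly bounded first moment.
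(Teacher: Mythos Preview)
Your argument is correct and shares with the paper the same two-step skeleton---first a uniform control on the minimizing measures, then a comparison---but the implementation of the first step is genuinely different. The paper exploits that adding a $1$-form does not change the energy $E$, that minimizing measures for $L+\sigma_n$ are supported on the level $E^{-1}(\alpha_n)$, and superlinearity of $E$ to get $\int\|v\|\,d\mu_n\le \alpha_n+B$; it then bootstraps the inequality $\alpha_n-\alpha\le\varepsilon(\alpha_n+B)$ (taking $\varepsilon=\tfrac12$) to produce a uniform bound on $\alpha_n$, without ever invoking a test measure. You instead work in the holonomic framework, use superlinearity of $L$ to obtain the pointwise bound $L+\xi-\omega_\lambda\ge\|v\|-B$, and close the uniform estimate via a fixed compactly supported $\mu_0\in\mathcal H$. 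Your route is slightly more conceptual (the set $\mathcal H$ is fixed once and for all, so the dependence on $(\xi,\lambda)$ is manifestly affine) and yields a local Lipschitz estimate $|\alpha_{L+\xi}(\lambda)-\alpha_{L+\xi_0}(\lambda_0)|\le R\,\|\beta\|_0$; the paper's route is more self-contained and uses the dynamical fact that Mather measures sit on a single energy level.

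One small slip: to conclude $\alpha_{L+\xi}(\lambda)\ge -C$ you need the action $\int(L+\xi-\omega_\lambda)\,d\mu_0$ to be bounded \emph{above} by $C$ on $\mathcal K$ (so that $-\alpha\le C$), not bounded below by $-C$. Since $\mu_0$ is compactly supported and the action depends continuously on $(\xi,\lambda)$, such an upper bound holds on any $C^0$-bounded neighbourhood, so the argument goes through unchanged.
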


\begin{proof}
Suppose that $\left( \xi _{n},\lambda _{n}\right) \rightarrow \left( \xi
,\lambda \right) ,\alpha _{n}=\alpha _{L+\xi _{n}}\left( \lambda _{n}\right)
,$ and $\alpha =\alpha _{L+\xi }\left( \lambda \right) .$ We shall prove
that $\alpha _{n}\rightarrow \alpha .$ Let us take $\xi _{n}-\lambda
_{n}=\sigma _{n}$ and $\xi -\lambda =\sigma .$ By the duality there exist
vector fields $X_{n}$ and $X$ on $M$ such that
\begin{equation*}
\sigma _{n}\left( x\right) \left( v\right) =\left\langle X_{n}\left(
x\right) ,v\right\rangle \text{ and }\sigma \left( x\right) \left( v\right)
=\left\langle X\left( x\right) ,v\right\rangle .
\end{equation*}

Observe that the energy function $E_{n}$ of $L+\sigma _{n}$ is
\begin{equation*}
E_{n}\left( x,v\right) =\left( L\left( x,v\right) +\left\langle X_{n}\left(
x\right) ,v\right\rangle \right) _{v}v-(L+\left\langle X_{n}\left( x\right)
,v\right\rangle )=L_{v}v-L=E\left( x,v\right) ,
\end{equation*}%
for every $n,$ where $E$ is the energy function of $L+\sigma .$ It is know
that $E$ is a superlinear function. Then there exist $B>0$ such that $%
E\left( x,v\right) \geq \left\Vert v\right\Vert -B$ for every $\left(
x,v\right) \in TM.$

Observe that $-\alpha _{n}=\int_{TM}\left( L\left( x,v\right) +\sigma
_{n}\right) d\mu _{n}$ where $\mu _{n}$ is a minimizing measure $\mu _{n}$
of $L+\sigma _{n}.$ Thus $\alpha _{n}=E_{n}\left( supp\left( \mu _{n}\right)
\right) =E\left( supp\left( \mu _{n}\right) \right) .$

Since $\left( \xi _{n},\lambda _{n}\right) \rightarrow \left( \xi ,\lambda
\right) ,$ given $\varepsilon >0,$ there exists $n_{0}\in
\mathbb{N}
$ such that
\begin{equation}
n>n_{0}\Rightarrow \left\Vert X_{n}\left( x\right) -X\left( x\right)
\right\Vert <\varepsilon ,\forall x\in M.  \label{f2}
\end{equation}%
Moreover, we have that $-\alpha \leq \int_{TM}\left( L+\sigma \right) d\mu
_{n}.$ Thus, for every $n>n_{0}$, we obtain
\begin{eqnarray}
\alpha _{n}-\alpha &\leq &-\int_{TM}\left( L+\sigma _{n}\right) d\mu
_{n}+\int_{TM}\left( L+\sigma \right) d\mu _{n}  \label{f1} \\
&=&\int_{TM}\left\langle X_{n}\left( x\right) ,v\right\rangle -\left\langle
X\left( x\right) ,v\right\rangle d\mu _{n}  \notag \\
&\leq &\int_{TM}\left\Vert X_{n}\left( x\right) -X\left( x\right)
\right\Vert \left\Vert v\right\Vert d\mu _{n}<\int_{TM}\varepsilon
\left\Vert v\right\Vert d\mu _{n}  \notag \\
&\leq &\varepsilon \int_{TM}\left( E\left( x,v\right) +B\right) d\mu
_{n}=\varepsilon \alpha _{n}+\varepsilon B.  \notag
\end{eqnarray}%
Taking $\varepsilon =\frac{1}{2}\ $above, we conclude
\begin{equation*}
0<\alpha _{n}\leq \frac{\alpha +\varepsilon B}{1-\varepsilon }=2\alpha +B.
\end{equation*}%
Given $\bar{\varepsilon}>0,$ we take $\varepsilon =\min \left\{ \frac{\bar{%
\varepsilon}}{2\alpha +2B},\frac{1}{2}\right\} >0$ in (\ref{f2}). So we can
use (\ref{f1}) to obtain that there exists $n_{1}\in
\mathbb{N}
$ such that $\alpha _{n}-\alpha \leq \varepsilon \alpha _{n}+\varepsilon
B\leq \bar{\varepsilon}$ for every $n>n_{1}.$

There exists $n_{2}\in
\mathbb{N}
$ such that
\begin{equation*}
n>n_{2}\Rightarrow \left\Vert X_{n}\left( x\right) -X\left( x\right)
\right\Vert <\frac{\bar{\varepsilon}}{\alpha +B},\forall x\in M.
\end{equation*}%
Let $\mu $ be a minimizing measure of $L+\sigma .$ Then%
\begin{eqnarray*}
\alpha -\alpha _{n} &\leq &-\int_{TM}\left( L+\sigma \right) d\mu
+\int_{TM}\left( L+\sigma _{n}\right) d\mu \\
&\leq &\int_{TM}\left\Vert X_{n}\left( x\right) -X\left( x\right)
\right\Vert \left\Vert v\right\Vert d\mu \\
&\leq &\int_{TM}\frac{\bar{\varepsilon}}{\alpha +B}\left\Vert v\right\Vert
d\mu \leq \frac{\bar{\varepsilon}}{\alpha +B}\int_{TM}\left( E\left(
x,v\right) +B\right) d\mu =\bar{\varepsilon}.
\end{eqnarray*}%
Therefore, if $n>\max \left\{ n_{0},n_{1},n_{2}\right\} ,$ we have $%
\left\vert \alpha -\alpha _{n}\right\vert <\bar{\varepsilon}.$
\end{proof}

\begin{proof}
\textit{(of Proposition \ref{propa1})} Let $\left( \xi _{0},c_{0}\right) $
be a point in $\Gamma ^{\infty }\left( M\right) \times H^{1}\left( M;%
\mathbb{R}
\right) .$ Since the Mañé set is contained in the energy level ${\widetilde{%
\mathcal{N}}}_{c}\left( L\right) \subset E^{-1}\left( \alpha \left( c\right)
\right) ,$ it follows from previous lemma that there exist a neighborhood $%
U\times V$ of $\left( \xi _{0},c_{0}\right) $ in $\Gamma ^{\infty }\left(
M\right) \times H^{1}\left( M;%
\mathbb{R}
\right) $ and a compact subset $\mathcal{K\subset }TM$ such that ${%
\widetilde{\mathcal{N}}}_{c}\left( L+\xi \right) \subset $ $\mathcal{K}$ for
every $\left( c,\xi \right) \in U^{\prime }\times V^{\prime }.$

Suppose by contradiction that the Mané set ${\widetilde{\mathcal{N}}}%
_{c_{0}}\left( L+\xi _{0}\right) $ is not upper-semiconti-nuous: There
exists a neighborhood $\mathcal{V}$ of ${\widetilde{\mathcal{N}}}%
_{c_{0}}\left( L+\xi _{0}\right) $ in $TM$ such that for every neighborhood $%
U\times V$ of $\left( \xi _{0},c_{0}\right) $ in $\Gamma ^{\infty }\left(
M\right) \times H^{1}\left( M;%
\mathbb{R}
\right) $ we have ${\widetilde{\mathcal{N}}}_{c}\left( L+\xi \right)
\nsubseteqq $ $\mathcal{V}$ for some $\left( \xi ,c\right) \in U\times V$.
Then it is posssible to obtain a sequence $\left( \xi _{n},c_{n}\right) \in
U^{\prime }\times V^{\prime }$ with $\left( \xi _{n},c_{n}\right)
\rightarrow \left( \xi _{0},c_{0}\right) $ in $\Gamma ^{\infty }\left(
M\right) \times H^{1}\left( M;%
\mathbb{R}
\right) $ and $\left( x_{n},v_{n}\right) \in {\widetilde{\mathcal{N}}}%
_{c_{n}}\left( L+\xi _{n}\right) \setminus \mathcal{V}$. Since ${\widetilde{%
\mathcal{N}}}_{c_{n}}\left( L+\xi _{n}\right) \setminus \mathcal{V}$ is
contained in the compact set $\mathcal{K}$ for every $n,$ we can suppose the
convergence $\left( x_{n},v_{n}\right) \rightarrow \left( x_{0},v_{0}\right)
\notin {\widetilde{\mathcal{N}}}_{c_{0}}\left( L+\xi _{0}\right) .\ $

We shall prove that the Euler Lagrange solution $\left( \gamma _{n}\left(
t\right) ,\dot{\gamma}_{n}\left( t\right) \right) =\varphi _{t}^{L+\xi
_{n}-c_{n}}\left( x_{n},v_{n}\right) $ converges on the compacts of the form
$\left[ 0,T\right] $:%
\begin{equation*}
\varphi _{t}^{L+\xi _{n}-c_{n}}\left( x_{n},v_{n}\right) \rightarrow \varphi
_{t}^{L+\xi _{0}-c_{0}}\left( x_{0},v_{0}\right) .
\end{equation*}%
Indeed, let $K=\sup\limits_{\left( x,v\right) \in \mathcal{K}}L\left(
x,v\right) .$ So%
\begin{equation*}
\int_{0}^{T}L\left( \gamma _{n}\left( t\right) ,\dot{\gamma}_{n}\left(
t\right) \right) dt\leq KT,
\end{equation*}%
Since each $\gamma _{n}$ is a $C^{k}$-curve and the actions of $L$ on the
curves $\gamma _{n}|_{\left[ 0,T\right] }$ are bounded by $KT$, we have that
the set $\left\{ \gamma _{n}\right\} $ is compact in the $C^{0}$-topology.
Actually, this set is compact in the $C^{1}$-topology because we have $%
\left\Vert \dot{\gamma}_{n}\right\Vert $ bounded and $L_{vv}$ positive
definite. Moreover, if $\gamma _{0}$ is a limit point of $\left\{ \gamma
_{n}|_{\left[ 0,T\right] }\right\} $, so $\gamma _{0}$ is a Tonelli
minimizing for the Lagrangian $L+\xi _{0}-c_{0}$. Thus if $y_{0}=\gamma
_{0}\left( T\right) $ we have
\begin{eqnarray}
\Phi _{c_{0}}^{L+\xi _{0}}\left( x_{0},y_{0}\right) &\leq &A_{c_{0}}^{L+\xi
_{0}}\left( \gamma _{0}|_{\left[ 0,T\right] }\right) +\alpha _{L+\xi
_{0}}\left( c_{0}\right) T  \label{eq1} \\
&=&\lim_{n}\left[ A_{c_{n}}^{L+\xi _{n}}\left( \gamma _{n}|_{\left[ 0,T%
\right] }\right) +\alpha _{L+\xi _{n}}\left( c_{n}\right) T\right]  \notag \\
&=&\lim_{n}\Phi _{c_{n}}^{L+\xi _{n}}\left( x_{n},\gamma _{n}\left( T\right)
\right) ,  \notag
\end{eqnarray}%
Write $\Delta =\lim \Phi _{c_{n}}^{L+\xi _{n}}\left( x_{n},\gamma _{n}\left(
T\right) \right) .$ If we prove that $\Delta =\Phi _{c_{0}}^{L+\xi
_{0}}\left( x_{0},y_{0}\right) ,$ then we have equality of (\ref{eq1}):
\begin{equation*}
A_{c_{0}}^{L+\xi _{0}}\left( \gamma _{0}|_{\left[ 0,T\right] }\right)
+\alpha _{L+\xi _{0}}\left( c_{0}\right) T=\Phi _{c_{0}}^{L+\xi _{0}}\left(
x_{0},y_{0}\right) ,
\end{equation*}%
that is $\left( x_{0},v_{0}\right) \in {\widetilde{\mathcal{N}}}%
_{c_{0}}\left( L+\xi _{0}\right) $ and we obtain a contradiction. If $\Phi
_{c_{0}}^{L+\xi _{0}}\left( x_{0},y_{0}\right) <\Delta -\varepsilon $ for
some $\varepsilon >0,$ then by definition of Mañé's potential, there exists
a curve $\sigma :\left[ 0,S\right] \rightarrow M$ with $\sigma \left(
0\right) =x_{0}$ and $\sigma \left( S\right) =y_{0}$ such that%
\begin{equation*}
\Phi _{c_{0}}^{L+\xi _{0}}\left( x_{0},y_{0}\right) \leq A_{c_{0}}^{L+\xi
_{0}}\left( \sigma |_{\left[ 0,S\right] }\right) +\alpha _{L+\xi _{0}}\left(
c_{0}\right) S<\Delta -\varepsilon .
\end{equation*}%
By triangular inequality property:%
\begin{eqnarray}
\Phi _{c_{n}}^{L+\xi _{n}}\left( x_{n},\gamma _{n}\left( T\right) \right)
&\leq &\Phi _{c_{n}}^{L+\xi _{n}}\left( x_{n},x_{0}\right) +\Phi
_{c_{n}}^{L+\xi _{n}}\left( x_{0},y_{0}\right) +\Phi _{c_{n}}^{L+\xi
_{n}}\left( \gamma _{0},\gamma _{n}\left( T\right) \right)  \label{eq5} \\
&\leq &A_{c_{n}}^{L+\xi _{n}}\left( \sigma |_{\left[ 0,S\right] }\right)
+\alpha _{L+\xi _{n}}\left( c_{n}\right) S+\Phi _{c_{n}}^{L+\xi _{n}}\left(
x_{n},x_{0}\right) +\Phi _{c_{n}}^{L+\xi _{n}}\left( \gamma _{0},\gamma
_{n}\left( T\right) \right) .  \notag
\end{eqnarray}%
Given $p,q\in M$ let us take $\eta $ a geodesic with speed of norm $1$ from $%
p$ to $q$ and $d=d_{M}\left( p,q\right) .$ Hence
\begin{eqnarray*}
\Phi _{c_{n}}^{L+\xi _{n}}\left( p,q\right) &\leq &A_{c_{n}}^{L+\xi
_{n}}\left( \eta |_{\left[ 0,d\right] }\right) +\alpha _{L+\xi _{n}}\left(
c_{n}\right) d \\
&=&\int_{0}^{d}\left[ L\left( \eta ,\dot{\eta}\right) +\left( \xi
_{n}-c_{n}\right) \left( \dot{\eta}\right) +\alpha _{L+\xi _{n}}\left(
c_{n}\right) \right] dt \\
&\leq &\left( \max_{\left\Vert v\right\Vert =1}\left\vert L\left( x,v\right)
\right\vert +\max_{\left\Vert v\right\Vert =1}\left\vert \left\langle \left(
\xi _{n}+c_{n}\right) \left( x\right) ,v\right\rangle \right\vert +\alpha
_{L+\xi _{n}}\left( c_{n}\right) \right) d.
\end{eqnarray*}%
By the continuity of critical value proved in Lemma \ref{lema2}, we obtain
that exists $K>0$ such that for $n$ suficiently large we have $\Phi
_{c_{n}}^{L+\xi _{n}}\left( p,q\right) \leq Kd_{M}\left( p,q\right) .$
Therefore, letting $n\rightarrow \infty $ we get $\Phi _{c_{n}}^{L+\xi
_{n}}\left( x_{n},x_{0}\right) \rightarrow 0,$ $\Phi _{c_{n}}^{L+\xi
_{n}}\left( \gamma _{0},\gamma _{n}\left( T\right) \right) \rightarrow 0$
and, by inequality (\ref{eq5}), we obtain a contradiction:
\begin{equation*}
\Delta =\lim \Phi _{c_{n}}^{L+\xi _{n}}\left( x_{n},\gamma _{n}\left(
T\right) \right) \leq A_{c_{0}}^{L+\xi _{0}}\left( \sigma |_{\left[ 0,S%
\right] }\right) +\alpha _{L+\xi _{0}}\left( c_{0}\right) S<\Delta
-\varepsilon .
\end{equation*}
\end{proof}

Now it is possible to conclude the proof of Proposition \ref{proposicao1}
stated in Introduction:

\begin{proof}
\textit{(of Proposition \ref{proposicao1}) }Let us take $\mathcal{U\subset }%
TM$ the neighborhood of ${\widetilde{\mathcal{A}}}_{c}\left( L\right) $
given by Lemma \ref{lema1} and $B=\pi \left( \mathcal{U}\right) \subset M$
(open subset of $M).$ Given $\varepsilon >0,$ let $\lambda :$ $M\rightarrow
\mathbb{R}
$ be a $C^{\infty }$ function given by%
\begin{equation}
\lambda \left( x\right) =\left\{
\begin{array}{l}
0,\text{ on }{\mathcal{M}}_{c}\left( L\right) \\
g\left( x\right) \text{ on }B\setminus {\mathcal{M}}_{c}\left( L\right) \\
0\text{ on }M\setminus B%
\end{array}%
\right. ,  \label{eq6}
\end{equation}%
fixed a function $g$ with $0<g\left( x\right) <\varepsilon .$ Let us take
the $C^{\infty }$ 1-form given by $\eta _{\varepsilon }\left( x\right)
\left( v\right) =\lambda \left( x\right) \left( \frac{\partial L}{\partial v}%
\left( x,X\left( x\right) \right) -\frac{\partial L}{\partial v}\left(
x,0\right) \right) \left( v\right) $ where the field $X$ is given by Lemma %
\ref{lema1}. Since $\mathfrak{M}_{c}\left( L\right) =\left\{ \mu
_{0}\right\} ,$ it follows from Lemma 5.3 in \cite{gon1} that ${\widetilde{%
\mathcal{A}}}_{c}\left( L\right) ={\widetilde{\mathcal{N}}}_{c}\left(
L\right) .$ Moreover since Mañé set is upper-semicontinuous with respect to
1-forms (Proposition \ref{propa1}), for $\varepsilon $ sufficiently small,
taking $\eta =\eta _{\varepsilon }$ we obtain ${\widetilde{\mathcal{N}}}%
_{c}\left( L+\eta _{\varepsilon }\right) \subset \mathcal{U}$. Hence
\begin{equation}
{\widetilde{\mathcal{M}}}_{c}\left( L+\eta \right) \subset {\widetilde{%
\mathcal{A}}}_{c}\left( L+\eta \right) \subset {\widetilde{\mathcal{N}}}%
_{c}\left( L+\eta \right) \subset \mathcal{U}.  \label{eq4}
\end{equation}%
This means that for every $\mu \in \mathfrak{M}_{c}\left( L+\eta \right) $
we have $supp\left( \mu \right) \subset \mathcal{U}$ and by Lemma \ref{lema1}
we obtain%
\begin{equation*}
\int_{TM}\eta d\mu =\int_{TM}\eta \left( x\right) vd\mu \geq 0.
\end{equation*}%
Let us to show that%
\begin{equation*}
{\widetilde{\mathcal{M}}}_{c}\left( L+\eta \right) ={\widetilde{\mathcal{M}}}%
_{c}\left( L\right) .
\end{equation*}%
Indeed, since $\eta |_{{\widetilde{\mathcal{M}}}_{c}\left( L\right) }\equiv
0,$ we have
\begin{equation*}
A_{c}^{L+\eta }\left( \mu _{0}\right) =A_{c}^{L}\left( \mu _{0}\right) \leq
A_{c}^{L}\left( \mu \right) \leq A_{c}^{L+\eta }\left( \mu \right) ,\forall
\mu \in \mathfrak{M}_{c}\left( L+\eta \right) ,
\end{equation*}%
That is%
\begin{equation*}
\mu _{0}\in \mathfrak{M}_{c}\left( L+\eta \right) .
\end{equation*}%
On the other hand, if $\delta \in \mathfrak{M}_{c}\left( L+\eta \right) ,$%
\begin{equation*}
A_{c}^{L}\left( \delta \right) \leq A_{c}^{L+\eta }\left( \delta \right)
=A_{c}^{L+\eta }\left( \mu _{0}\right) =A_{c}^{L}\left( \mu _{0}\right) ,
\end{equation*}%
Therefore $\delta \in \mathfrak{M}_{c}\left( L\right) $ and we conclude that%
\begin{equation*}
{\widetilde{\mathcal{M}}}_{c}\left( L+\eta \right) =supp\left( \mu
_{0}\right) .
\end{equation*}%
Let us suppose that ${\widetilde{\mathcal{A}}}_{c}\left( L+\eta \right) \neq
{\widetilde{\mathcal{M}}}_{c}\left( L+\eta \right) .$ Recall that since the
Graph Property holds for ${\widetilde{\mathcal{A}}}_{c}\left( L+\eta \right)
,$ there exists
\begin{equation*}
x\in {\mathcal{A}}_{c}\left( L+\eta \right) \setminus {\mathcal{M}}%
_{c}\left( L+\eta \right) .
\end{equation*}%
Let $\gamma :%
\mathbb{R}
\rightarrow M$ be the minimizing curve for $L+\eta -c$ with $\gamma \left(
0\right) =x.$ It follows from \ref{eq4} that $\left( \gamma ,\dot{\gamma}%
\right) \left(
\mathbb{R}
\right) \subset \mathcal{U}.$ Since $\mu _{0}$ is ergodic, almost every
point has a dense orbit on $supp\left( \mu _{0}\right) .$ Let $z\in $ $%
supp\left( \mu _{0}\right) $ be such that it has a dense orbit $\left(
\sigma ,\dot{\sigma}\right) $ on $supp\left( \mu _{0}\right) $. Then given $%
u,w\in \pi \left( supp\left( \mu _{0}\right) \right) ,$ we can take $%
t_{n}>s_{n}>0$ such that $u=\lim_{n}\sigma \left( s_{n}\right) $ and $%
w=\lim_{n}\sigma \left( t_{n}\right) .$ Observe that the critical values $%
\alpha _{L}\left( c\right) =\alpha _{L+\eta }\left( c\right) =\alpha $ are
the same. Moreover as we mentioned before $\eta |_{supp\left( \mu
_{0}\right) }\equiv 0.$ Hence
\begin{eqnarray*}
\Phi _{c}^{L+\eta }\left( u,w\right) &=&\lim_{n}\Phi _{c}^{L+\eta }\left(
\sigma \left( s_{n}\right) ,\sigma \left( t_{n}\right) \right)
=\lim_{n}A_{c}^{L+\eta +\alpha }\left( \sigma |_{\left[ s_{n},t_{n}\right]
}\right) \\
&=&\lim_{n}A_{c}^{L+\alpha }\left( \sigma |_{\left[ s_{n},t_{n}\right]
}\right) =\lim_{n}\Phi _{c}^{L}\left( \sigma \left( s_{n}\right) ,\sigma
\left( t_{n}\right) \right) \\
&=&\Phi _{c}^{L}\left( u,w\right) .
\end{eqnarray*}

It is known that for every $\left( y,w\right) \in {\widetilde{\mathcal{N}}}%
_{c}\left( L+\eta \right) ,$ the $\omega $ and $\alpha $-limit sets of $%
\left( y,w\right) $ are contained in $supp\left( \mu _{0}\right) $ (because $%
{\widetilde{\mathcal{M}}}_{c}\left( L+\eta \right) =supp\left( \mu
_{0}\right) )$. Hence $\omega $ and $\alpha $-limits of $x$ are contained in
$\pi \left( supp\left( \mu _{0}\right) \right) .$ Now let $%
t_{n},s_{n}\rightarrow \infty $ such that $u_{0}=\lim_{n}\gamma \left(
t_{n}\right) $ and $w_{0}=\lim_{n}\gamma \left( -s_{n}\right) .$ We observe
that $x\notin {\mathcal{M}}_{c}\left( L\right) ={\mathcal{M}}_{c}\left(
L+\eta \right) $ then for $n_{0}$ sufficiently big, $\gamma |_{\left[
t_{n_{0}},t_{n}\right] }\subset B\setminus {\mathcal{M}}_{c}\left( L\right) $
for every $t_{n}>t_{n_{0}}.$ By the definition of $\lambda ,$ there exists $%
a>0$ such that $\int_{-s_{n}}^{t_{n}}\lambda \left( \gamma \left( t\right)
\right) dt>a.$ Therefore by Lemma \ref{lema1} we have
\begin{equation*}
\int_{-s_{n}}^{t_{n}}\eta \left( \gamma \left( t\right) \right) \dot{\gamma}%
\left( t\right) dt\geq \int_{-s_{n}}^{t_{n}}\lambda \left( \gamma \left(
t\right) \right) Kdt>aK.
\end{equation*}%
Hence%
\begin{eqnarray*}
0 &=&\Phi _{c}^{L+\eta }\left( \gamma \left( -s_{n}\right) ,\gamma \left(
t_{n}\right) \right) +\Phi _{c}^{L+\eta }\left( \gamma \left( t_{n}\right)
,\gamma \left( -s_{n}\right) \right) \\
&=&A_{c}^{L+\eta +\alpha }\left( \gamma |_{\left[ -s_{n},t_{n}\right]
}\right) +\Phi _{c}^{L+\eta }\left( \gamma \left( t_{n}\right) ,\gamma
\left( -s_{n}\right) \right) \\
&>&A_{c}^{L+\alpha }\left( \gamma |_{\left[ -s_{n},t_{n}\right] }\right)
+aK+\Phi _{c}^{L+\eta }\left( \gamma \left( t_{n}\right) ,\gamma \left(
-s_{n}\right) \right) \\
&\geq &\Phi _{c}^{L}\left( \gamma \left( -s_{n}\right) ,\gamma \left(
t_{n}\right) \right) +\Phi _{c}^{L+\eta }\left( \gamma \left( t_{n}\right)
,\gamma \left( -s_{n}\right) \right) +aK.
\end{eqnarray*}%
Taking limit as $n\rightarrow \infty $ we obtain%
\begin{eqnarray*}
0 &\geq &\Phi _{c}^{L}\left( w_{0},u_{0}\right) +\Phi _{c}^{L+\eta }\left(
u_{0},w_{0}\right) +aK \\
&=&\Phi _{c}^{L}\left( w_{0},u_{0}\right) +\Phi _{c}^{L}\left(
u_{0},w_{0}\right) +aK\geq aK.
\end{eqnarray*}%
This contradiction implies that
\begin{equation*}
{\widetilde{\mathcal{A}}}_{c}\left( L+\eta \right) ={\widetilde{\mathcal{M}}}%
_{c}\left( L+\eta \right) ={\widetilde{\mathcal{M}}}_{c}\left( L\right) .
\end{equation*}

Since $\#\mathfrak{M}_{c}\left( L+\eta \right) =1$ it follows from Lemma 5.3
in \cite{gon1} that ${\widetilde{\mathcal{A}}}_{c}\left( L+\eta \right) ={%
\widetilde{\mathcal{N}}}_{c}\left( L+\eta \right) .$ Then we conclude the
proof.
\end{proof}

\subsection{The exact magnetic Lagrangian case}

Now let $L$ be an exact magnetic Lagrangian. That is%
\begin{equation*}
L\left( x,v\right) =\frac{\left\Vert v\right\Vert ^{2}}{2}+\xi _{x}\left(
v\right)
\end{equation*}%
for some non-closed 1-form $\xi _{x}$. We will prove that ${\widetilde{%
\mathcal{M}}}_{c}\left( L\right) $ supports on a uniquely ergodic measure
for a residual set of $H^{1}\left( M;%
\mathbb{R}
\right) .$ In order for this, we need of following conclusion derived from
Theorem 1.1 in \cite{car3}:

\begin{theorem}
\label{teo2}Let $L$ be an exact magnetic Lagrangian. Given a cohomology
class $c,$ there exists a residual subset $\mathcal{O}_{c}$ of $\Gamma
^{\infty }\left( M\right) $ such that for any $\omega \in \mathcal{O}_{c},$ $%
{\widetilde{\mathcal{M}}}_{c}\left( L+\omega \right) $ supports on a
uniquely ergodic measure$.$
\end{theorem}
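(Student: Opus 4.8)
The plan is to deduce Theorem~\ref{teo2} by specializing Theorem~1.1 of \cite{car3} to the perturbation family consisting of smooth $1$-forms, so the first task is to check that this family is admissible for that machinery. I would record that every $L+\omega$ with $\omega\in\Gamma^{\infty}(M)$ is again a Tonelli Lagrangian, indeed an exact magnetic one: the term $\omega_{x}(v)$ is linear on each fibre, hence $\partial^{2}L/\partial v^{2}$ is unchanged and fibrewise strict convexity and superlinearity persist, while the ``$\tfrac{1}{2}\Vert v\Vert^{2}+$ ($1$-form)'' shape is preserved. Thus $\omega\mapsto L+\omega$ is a smooth family of Tonelli Lagrangians, and with the metric $d$ of \eqref{d1} the space $\Gamma^{\infty}(M)$ is a complete metric (Fréchet) space, hence a Baire space, which is exactly the setting the Baire-category argument of \cite{car3} requires.

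Next I would set up the functionals that drive the genericity argument. For the fixed class $c$, adding $\omega$ changes the action of an invariant measure $\mu$ by the linear quantity $\int_{TM}\omega_{x}(v)\,d\mu$, and the associated critical value $\alpha_{L+\omega}(c)$ depends continuously on $\omega$ by Lemma~\ref{lema2}. Following the scheme of \cite{car3}, I would describe the ``bad'' set $\mathcal{B}\subset\Gamma^{\infty}(M)$ of those $\omega$ for which $\widetilde{\mathcal{M}}_{c}(L+\omega)$ fails to be the support of a single, uniquely ergodic minimizing measure, and show that it is meagre: one covers it by countably many closed, nowhere-dense sets indexed by the possible obstructions (pairs of distinct ergodic minimizers that cannot be merged by a perturbation, or the presence of extra invariant probabilities on the Mather set). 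Residuality of $\mathcal{O}_{c}=\Gamma^{\infty}(M)\setminus\mathcal{B}$ is then immediate from the Baire property.

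The delicate step—and the one I expect to be the main obstacle—is verifying that $1$-forms supply \emph{enough} perturbations to run the separation argument, since the potentials used in the Contreras--Paternain scheme of \cite{gon1} are not available in the magnetic class. Two competing ergodic minimizers are separated only by functionals of the fibre-linear form $\omega\mapsto\int_{TM}\omega_{x}(v)\,d\mu$; here I would invoke Mather's graph property, writing $\widetilde{\mathcal{M}}_{c}$ as a graph $v=\mathcal{X}(x)$, so that $\int_{TM}\omega_{x}(v)\,d\mu=\int_{M}\langle X(x),\mathcal{X}(x)\rangle\,d(\pi_{*}\mu)$ and the separation of measures reduces to separating their projections $\pi_{*}\mu$ on $M$ by the functions $x\mapsto\langle X(x),\mathcal{X}(x)\rangle$, $X$ ranging over vector fields dual to $1$-forms. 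One must simultaneously account for the degeneracies that exact $1$-forms integrate to zero against every invariant measure (they leave the Euler--Lagrange flow untouched) and that closed non-exact forms merely translate the cohomology class, so the effective perturbations come from the non-closed directions together with the closed-but-non-exact ones; in particular the argument is most delicate where $\mathcal{X}(x)=0$, since there the separating functions vanish identically. Confirming that these directions satisfy the transversality and openness hypotheses of Theorem~1.1 of \cite{car3}, so that the required approximation by a suitable $1$-form can always be carried out, is the technical heart of the proof.
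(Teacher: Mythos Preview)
The paper does not give a proof of Theorem~\ref{teo2}; it simply states the result as ``a conclusion derived from Theorem~1.1 in \cite{car3}'' and moves on. Your proposal takes the same route---deducing the statement from Theorem~1.1 of \cite{car3}---so at the level of approach you match the paper exactly; you just supply more of the scaffolding (Tonelli structure of $L+\omega$, Baire property of $\Gamma^{\infty}(M)$, the separation mechanism) than the paper bothers to write down.

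One remark on your outline: your concern about points where $\mathcal{X}(x)=0$ is empty in the exact magnetic setting. Here $L(x,0)=0$ for all $x$, so $e_{0}=0$, and the energy is $E(x,v)=\tfrac{1}{2}\Vert v\Vert^{2}$. The paper records (citing \cite{pat1}, Corollary~5.1) that $\alpha(c)>e_{0}=0$ for every $c$, so the Mather set sits in a strictly positive energy level and the graph section $\mathcal{X}$ never vanishes. This removes the degeneracy you flagged, and is precisely the kind of fact that makes the $1$-form perturbation family rich enough for the argument of \cite{car3} to go through.
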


Since the subset $\Lambda \subset \Gamma ^{\infty }\left( M\right) $ of
non-closed 1-forms is open and dense in $\Gamma ^{\infty }\left( M\right) ,$
we can consider the residual $\mathcal{O}_{c},$ intercepting with $\Lambda $
if necessary, such that its elements are non-closed 1-forms and such that $%
\xi _{x}+\omega $ are non-closed 1-forms. This means that $L+\omega $ is
also an exact magnetic Lagrangian. In this case, the magnetic field of the
perturbed Lagrangian changes the Lorentz force.

Observe that the 1-form obtained in Proposition \textit{\ref{proposicao1}}
is given by
\begin{equation}
\eta _{x}\left( v\right) =\lambda \left( x\right) \left( \frac{\partial L}{%
\partial v}\left( x,X\left( x\right) \right) -\frac{\partial L}{\partial v}%
\left( x,0\right) \right) \left( v\right) ,  \label{eq7}
\end{equation}%
where the $C^{\infty }$ function $\lambda :$ $M\rightarrow
\mathbb{R}
$ is given in \ref{eq6} and the field $X$ is given by Lemma \ref{lema1}.
Then for the exact magnetic Lagrangian case, we have $\eta _{x}\left(
v\right) =\left\langle \lambda \left( x\right) X\left( x\right)
,v\right\rangle .$

With these notations we obtain the following corollary:

\begin{corollary}
\label{coro1}Let $L$ be an exact magnetic Lagrangian. Given a cohomology
class $c\in $ $H^{1}\left( M;%
\mathbb{R}
\right) $ and a 1-form $\omega \in \mathcal{O}_{c}$, there exists a $%
C^{\infty }$ 1-form $\eta $ (sufficiently close to zero) such that%
\begin{equation*}
{\widetilde{\mathcal{M}}}_{c}\left( L+\omega +\eta \right) ={\widetilde{%
\mathcal{A}}}_{c}\left( L+\omega +\eta \right) ={\widetilde{\mathcal{N}}}%
_{c}\left( L+\omega +\eta \right) ={\widetilde{\mathcal{M}}}_{c}\left(
L+\omega \right) ,
\end{equation*}%
with ${\widetilde{\mathcal{M}}}_{c}\left( L+\omega \right) $ supports on a
uniquely ergodic measure.
\end{corollary}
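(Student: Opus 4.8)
The plan is to combine Theorem \ref{teo2} with Proposition \ref{proposicao1}, treating $L+\omega$ as the base Tonelli Lagrangian to which we apply the perturbation result. First I would fix the cohomology class $c$ and invoke Theorem \ref{teo2} to obtain $\omega\in\mathcal{O}_c$ with the property that $\widetilde{\mathcal{M}}_c(L+\omega)$ supports a uniquely ergodic measure $\mu_0$; by hypothesis this measure is unique, so $\mathfrak{M}_c(L+\omega)=\{\mu_0\}$. The key observation is that $L+\omega$ is itself a Tonelli Lagrangian (indeed an exact magnetic Lagrangian, as noted just before the corollary), so Proposition \ref{proposicao1} applies verbatim with $L$ replaced by $L+\omega$, provided the hypothesis $\alpha(c)>e_0$ holds.

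The main point to address is the energy-level hypothesis $\alpha(c)>e_0$ required by Proposition \ref{proposicao1} and by Lemma \ref{lema1}. I would either restrict attention to those cohomology classes $c$ with $\alpha_{L+\omega}(c)>e_0$ (which is the interesting regime, since for $\alpha(c)=e_0$ the minimizing measure sits at the energy level of the fixed points and the statement is either trivial or handled separately), or note that for exact magnetic Lagrangians one has $\min\alpha=e_0$ with equality only on a lower-dimensional set of cohomology classes. Granting $\alpha_{L+\omega}(c)>e_0$, Proposition \ref{proposicao1} produces a $C^\infty$ 1-form $\eta$, arbitrarily close to zero, such that
\begin{equation*}
\widetilde{\mathcal{M}}_c(L+\omega+\eta)=\widetilde{\mathcal{A}}_c(L+\omega+\eta)=\widetilde{\mathcal{N}}_c(L+\omega+\eta)=\widetilde{\mathcal{M}}_c(L+\omega),
\end{equation*}
which is exactly the asserted chain of equalities.

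It remains only to record that $\widetilde{\mathcal{M}}_c(L+\omega)$ still supports a uniquely ergodic measure, which is immediate: the last displayed equation of Proposition \ref{proposicao1} shows $\widetilde{\mathcal{M}}_c(L+\omega+\eta)=\mathrm{supp}(\mu_0)$ and $\mathfrak{M}_c(L+\omega+\eta)=\{\mu_0\}$, and $\mu_0$ is uniquely ergodic by the choice of $\omega\in\mathcal{O}_c$ from Theorem \ref{teo2}. Since unique ergodicity is a property of the measure $\mu_0$ itself and $\mu_0$ is unchanged by the perturbation, the conclusion transfers directly.

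I expect the only genuine obstacle to be bookkeeping about whether $L+\omega+\eta$ remains an exact magnetic Lagrangian; this is handled by the remark preceding the corollary, where $\eta_x(v)=\langle\lambda(x)X(x),v\rangle$ is seen to be of the required form, so that adding it to $L+\omega$ merely alters the Lorentz force and keeps the perturbed Lagrangian inside the exact magnetic family. Everything else is a direct substitution into the already-proven Proposition \ref{proposicao1}.
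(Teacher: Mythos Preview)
Your overall strategy matches the paper's: apply Proposition~\ref{proposicao1} to the Tonelli (indeed exact magnetic) Lagrangian $L+\omega$, using that $\omega\in\mathcal{O}_c$ guarantees $\widetilde{\mathcal{M}}_c(L+\omega)$ is uniquely ergodic. The paper's proof is essentially one line beyond this.

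There is, however, a genuine gap in how you handle the hypothesis $\alpha(c)>e_0$. The corollary is stated for \emph{every} cohomology class $c\in H^{1}(M;\mathbb{R})$, so your first alternative (restricting to those $c$ with $\alpha_{L+\omega}(c)>e_0$) would not prove the full statement, and your second alternative (claiming $\min\alpha=e_0$ with equality only on a lower-dimensional set) is both imprecise and still insufficient. The paper closes this gap by invoking a specific result of Paternain--Paternain (\cite{pat1}, Corollary~5.1): for an exact magnetic Lagrangian the strict inequality $\alpha(c)>e_0$ holds for \emph{all} $c\in H^{1}(M;\mathbb{R})$. This is a nontrivial structural fact about magnetic Lagrangians (it fails for general Tonelli Lagrangians), and it is exactly what allows Proposition~\ref{proposicao1} to be applied without restriction on $c$. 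Once you cite this, the rest of your argument is correct and coincides with the paper's.
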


\begin{proof}
It follows from \cite{pat1}, Corollary 5.1 that $\alpha \left( c\right)
>e_{0}$ for every $c\in $ $H^{1}\left( M;%
\mathbb{R}
\right) .$ Then the proof follows directly from Proposition \textit{\ref%
{proposicao1}} and remarks above.
\end{proof}

In order to state some direct consequences from Proposition \textit{\ref%
{proposicao1}}, let us take $\zeta =\left\{ c_{n}\right\} _{n=1}^{\infty }$
a dense sequence in $H^{1}\left( M;%
\mathbb{R}
\right) .$

\begin{corollary}
\label{coro2}Let $L$ be an exact magnetic Lagrangian. Then there exists a
residual subset $\mathcal{O}^{\prime }$ of $\Gamma ^{\infty }\left( M\right)
$ such that for each $\omega \in \mathcal{O}^{\prime }$ and each $c_{n}\in
\zeta ,$ there exists a 1-form $\eta _{\omega ,n}$ (sufficiently close to $%
0) $ such that
\begin{equation*}
{\widetilde{\mathcal{M}}}_{c_{n}}\left( L+\omega +\eta _{\omega ,n}\right) ={%
\widetilde{\mathcal{A}}}_{c_{n}}\left( L+\omega +\eta _{\omega ,n}\right) ={%
\widetilde{\mathcal{N}}}_{c_{n}}\left( L+\omega +\eta _{\omega ,n}\right) ,
\end{equation*}%
with ${\widetilde{\mathcal{M}}}_{c_{n}}\left( L+\omega +\eta _{\omega
,n}\right) $ supports on a uniquely ergodic measure.
\end{corollary}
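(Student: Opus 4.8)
The plan is to obtain the single residual set $\mathcal{O}'$ by intersecting, over the countable dense family $\zeta = \{c_n\}$, the residual sets already furnished for each individual cohomology class. First I would recall that for each fixed $c_n \in \zeta$, Theorem \ref{teo2} produces a residual subset $\mathcal{O}_{c_n} \subset \Gamma^{\infty}(M)$ such that $\widetilde{\mathcal{M}}_{c_n}(L+\omega)$ supports a uniquely ergodic measure for every $\omega \in \mathcal{O}_{c_n}$. I would then simply define
$$\mathcal{O}' = \bigcap_{n=1}^{\infty} \mathcal{O}_{c_n}.$$

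Second, I would verify that $\mathcal{O}'$ is residual in $\Gamma^{\infty}(M)$. The essential point is that the metric $d$ defined in (\ref{d1}) makes $\Gamma^{\infty}(M)$ a complete metric space (in fact a Fr\'echet space), hence a Baire space. In such a space every residual set contains a dense $G_{\delta}$, and a countable intersection of dense $G_{\delta}$ sets is again a dense $G_{\delta}$; therefore the countable intersection $\mathcal{O}'$ of the residual sets $\mathcal{O}_{c_n}$ is itself residual in $\Gamma^{\infty}(M)$.

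Finally, I would fix an arbitrary $\omega \in \mathcal{O}'$ and an arbitrary $c_n \in \zeta$. Since $\omega \in \mathcal{O}_{c_n}$ by construction, Corollary \ref{coro1} applies with the cohomology class $c_n$ and yields a $C^{\infty}$ 1-form $\eta_{\omega,n}$, arbitrarily close to zero, with
$$\widetilde{\mathcal{M}}_{c_n}(L+\omega+\eta_{\omega,n}) = \widetilde{\mathcal{A}}_{c_n}(L+\omega+\eta_{\omega,n}) = \widetilde{\mathcal{N}}_{c_n}(L+\omega+\eta_{\omega,n}) = \widetilde{\mathcal{M}}_{c_n}(L+\omega),$$
the rightmost set supporting a uniquely ergodic measure. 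This is exactly the asserted conclusion. The whole dynamical content is carried by Theorem \ref{teo2} and Corollary \ref{coro1}, so no new perturbation analysis is needed; the only thing to check carefully is the Baire-category bookkeeping, namely that $\Gamma^{\infty}(M)$ with the metric (\ref{d1}) is genuinely complete so that the countable intersection of residual sets remains residual. I expect this completeness verification to be the sole (and essentially routine) obstacle.
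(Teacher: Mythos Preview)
Your proposal is correct and follows essentially the same approach as the paper: intersect the countably many residual sets $\mathcal{O}_{c_n}$ from Theorem~\ref{teo2} to obtain $\mathcal{O}'$, then apply Proposition~\ref{proposicao1} (equivalently Corollary~\ref{coro1}) to $L+\omega$ at each $c_n$ to produce $\eta_{\omega,n}$. The only difference is that you spell out the Baire-category justification for $\mathcal{O}'$ being residual, which the paper simply asserts.
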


\begin{proof}
It follows from Theorem \ref{teo2} that there exists a residual subset $%
\mathcal{O}_{c_{n}}$ of $\Gamma ^{\infty }\left( M\right) $ such that for
any $\omega \in \mathcal{O}_{c_{n}},$ ${\widetilde{\mathcal{M}}}%
_{c_{n}}\left( L+\omega \right) $ supports on a uniquely ergodic measure.
Let $\mathcal{O}^{\prime }$ be the residual subset $\mathcal{O}^{\prime }%
\mathcal{=\cap O}_{c_{n}}$ of $\Gamma ^{\infty }\left( M\right) .$ Now, for
each $\omega \in \mathcal{O}^{\prime }$, by taking $L+\omega $ and $c=c_{n}$
in Proposition \textit{\ref{proposicao1}}, we conclude that there exists $%
\eta _{\omega ,n}$ sufficiently close to $0$ that satisfies the statement.
\end{proof}

\begin{corollary}
\label{coro3}Let $L$ be an exact magnetic Lagrangian. Then for each $%
c_{n}\in \zeta $ there exists a dense set $\mathcal{O}_{n}$ of $\Gamma
^{\infty }\left( M\right) $ such that
\begin{equation*}
{\widetilde{\mathcal{M}}}_{c_{n}}\left( L+\varphi _{n}\right) ={\widetilde{%
\mathcal{A}}}_{c_{n}}\left( L+\varphi _{n}\right) ={\widetilde{\mathcal{N}}}%
_{c_{n}}\left( L+\varphi _{n}\right) ,
\end{equation*}%
and ${\widetilde{\mathcal{M}}}_{c_{n}}\left( L+\varphi _{n}\right) $
supports on a uniquely ergodic measure, for every $\varphi _{n}\in \mathcal{O%
}_{n}.$
\end{corollary}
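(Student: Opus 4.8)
The plan is to combine the density of the unique-ergodicity property (Theorem \ref{teo2}) with the collapse of the three sets produced by an arbitrarily small additive perturbation (Corollary \ref{coro1}), so that $\mathcal{O}_n$ is obtained as a controlled small deformation of a dense set. The key point is that the deforming 1-form supplied by Proposition \ref{proposicao1} can be made as small as we please, so density is not destroyed.

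First I would fix $c_n\in\zeta$. By Theorem \ref{teo2} there is a residual---hence dense---subset $\mathcal{O}_{c_n}\subset\Gamma^{\infty}(M)$ such that $\widetilde{\mathcal{M}}_{c_n}(L+\omega)$ supports a uniquely ergodic measure for every $\omega\in\mathcal{O}_{c_n}$. I would recall that, after intersecting with the open dense set of non-closed forms as in the remark following Theorem \ref{teo2}, $L+\omega$ is again an exact magnetic Lagrangian, and that $\alpha_{L+\omega}(c_n)>e_0$ by \cite{pat1}, Corollary 5.1; thus the hypotheses of Corollary \ref{coro1} (equivalently Proposition \ref{proposicao1}, applied with base Lagrangian $L+\omega$) are satisfied.

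Next, for each $\omega\in\mathcal{O}_{c_n}$ I would invoke Corollary \ref{coro1} to obtain a $C^{\infty}$ 1-form $\eta_{\omega}$ such that
\begin{equation*}
\widetilde{\mathcal{M}}_{c_n}(L+\omega+\eta_{\omega})=\widetilde{\mathcal{A}}_{c_n}(L+\omega+\eta_{\omega})=\widetilde{\mathcal{N}}_{c_n}(L+\omega+\eta_{\omega})=\widetilde{\mathcal{M}}_{c_n}(L+\omega),
\end{equation*}
the right-hand side supporting a uniquely ergodic measure. Setting $\varphi_{\omega}:=\omega+\eta_{\omega}$ and $\mathcal{O}_n:=\{\varphi_{\omega}:\omega\in\mathcal{O}_{c_n}\}$, every element of $\mathcal{O}_n$ satisfies the conclusion of the corollary by construction, since $L+\varphi_{\omega}=L+\omega+\eta_{\omega}$.

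It remains to prove that $\mathcal{O}_n$ is dense, and this is where the only genuine obstacle lies. Given $\beta\in\Gamma^{\infty}(M)$ and $\epsilon>0$, density of $\mathcal{O}_{c_n}$ yields $\omega\in\mathcal{O}_{c_n}$ with $d(\omega,\beta)<\epsilon/2$; I then need $\eta_{\omega}$ small enough that $d(\omega+\eta_{\omega},\omega)<\epsilon/2$, whence $d(\varphi_{\omega},\beta)<\epsilon$. The care required is that $\eta_{\omega}$ be small not merely in the $C^0$-norm but in the full metric \ref{d1}, i.e. in every $C^k$-norm simultaneously. This is arranged by scaling the construction \ref{eq7}: the vector field $X$ from Lemma \ref{lema1} is fixed once $\omega$ is fixed, so choosing $\lambda=\varepsilon\lambda_0$ for a fixed smooth $\lambda_0$ with $0<\lambda_0<1$ on $B\setminus\mathcal{M}_{c_n}(L+\omega)$ makes $\eta_{\omega}=\varepsilon\,\theta$ with $\theta$ a fixed smooth 1-form, so that $d(\eta_{\omega},0)\to0$ as $\varepsilon\to0$. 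Once this smallness is verified, the density of $\mathcal{O}_n$ follows, and the whole statement is an immediate consequence of Theorem \ref{teo2} and Corollary \ref{coro1}.
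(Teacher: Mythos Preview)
Your proposal is correct and follows essentially the same strategy as the paper: start from the dense set on which the Mather set is uniquely ergodic and shift each element by the small 1-form supplied by Proposition~\ref{proposicao1}/Corollary~\ref{coro1}; the paper routes this through Corollary~\ref{coro2} and uses the residual $\mathcal{O}'=\bigcap_n\mathcal{O}_{c_n}$, whereas you work directly with $\mathcal{O}_{c_n}$ for the fixed $n$, which is equally valid here. Your explicit verification that the perturbing form can be made small in every $C^k$-norm via the scaling $\eta_\omega=\varepsilon\,\theta$ is a useful elaboration of the density claim, which the paper leaves implicit in the phrase ``sufficiently close to zero''.
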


\begin{proof}
Let $\mathcal{O}^{\prime }$ be the residual subset obtained from Corollary %
\ref{coro2}. We can vary $\omega \in \mathcal{O}^{\prime }$(which is dense
in $\Gamma ^{\infty }\left( M\right) )$ in order to obtain the following
dense set
\begin{equation*}
\mathcal{O}_{n}=\left\{ \varphi _{n}=\omega +\eta _{\omega ,n}:\omega \in
\mathcal{O}^{\prime }\right\} ,
\end{equation*}
for each $n\in
\mathbb{N}
$. This proves the statement.
\end{proof}

Finally, we can prove Theorem \ref{teorema1}:

\begin{proof}
\textit{(of Theorem \ref{teorema1}) }Fix $c_{n}\in \zeta $ and $\sigma
=\varphi _{n}\in \mathcal{O}_{n}$ given by Corollary \ref{coro3}. Consider
the following neighborhood of ${\widetilde{\mathcal{M}}}_{c_{n}}\left(
L+\sigma \right) $ in $TM:$
\begin{equation*}
\mathcal{V}_{n,r}\left( \sigma \right) \mathcal{=}\left\{ P\in
TM:d_{TM}\left( P,{\widetilde{\mathcal{M}}}_{c_{n}}\left( L+\sigma \right)
\right) <\frac{1}{r}\right\} ,
\end{equation*}%
Given $\varepsilon >0,$ let $\left\{ z_{1},...,z_{N}\right\} \subset {%
\widetilde{\mathcal{M}}}_{c_{n}}\left( L+\sigma \right) =supp\left( \mu
_{n}\right) \ $be such that $supp\left( \mu _{n}\right) \subset
\bigcup\nolimits_{i=1}^{N}B\left( z_{i},\frac{1}{r}\right) ,$ where $B\left(
z_{i},\frac{1}{r}\right) \subset \mathcal{V}_{n,r}\left( \sigma \right) $ is
the open ball of center $z_{i}$ and radius $\frac{1}{r}$ in $TM.$ There
exists a open $A_{n,r}\left( \sigma \right) $ of $\left( \sigma
,c_{n}\right) $ in $\Gamma ^{\infty }\left( M\right) \times H^{1}\left( M;%
\mathbb{R}
\right) $ such that for every $\left( \xi ,c\right) \in A_{n,r}\left( \sigma
\right) $ we have%
\begin{equation}
\sup_{P\in {\widetilde{\mathcal{M}}}_{c_{n}}\left( L+\sigma \right)
}d_{TM}\left( P,{\widetilde{\mathcal{M}}}_{c}\left( L+\xi \right) \right) <%
\frac{1}{r}.  \label{eq2}
\end{equation}%
Indeed, otherwise, we obtain sequences $c_{n}^{k}\rightarrow c_{n},\xi
_{k}\rightarrow \sigma $ (as $k\rightarrow \infty $) and $P_{k}\in {%
\widetilde{\mathcal{M}}}_{c_{n}}\left( L+\sigma \right) =supp\left( \mu
_{n}\right) $ such that $d_{TM}\left( P_{k},{\widetilde{\mathcal{M}}}%
_{c_{n}^{k}}\left( L+\xi _{k}\right) \right) \geq \frac{1}{r}.$ We consider
a sequence of minimizing measures $\mu _{n}^{k},supp\left( \mu
_{n}^{k}\right) \subset {\widetilde{\mathcal{M}}}_{c}\left( L+\xi \right) .$
The continuity of $\alpha _{L+\xi }\left( c\right) $ implies that $\mu
_{n}^{k}\rightarrow \mu _{n}$ (as $k\rightarrow \infty $) weakly*. Hence if $%
g_{i}:TM\rightarrow
\mathbb{R}
$ is a positive continuous function with
\begin{equation*}
B\left( z_{i},\frac{1}{r}\right) \cap supp\left( \mu _{n}\right) \subset
supp\left( g_{i}\right) \subset B\left( z_{i},\frac{1}{r}\right) ,
\end{equation*}%
we have $\int g_{i}d\mu _{n}>0$ for every $i=1,...,N.$ Then $\int g_{i}d\mu
_{n}^{k}>0,$ for $k$ sufficiently big and for any $i=1,...,N$, that implies $%
B\left( z_{i},\frac{1}{r}\right) \cap supp\left( \mu _{n}^{k}\right) \neq
\emptyset .$ Since $P_{k}\in B\left( z_{i},\frac{1}{r}\right) $ for some $%
i=1,...,N$ we obtain $d_{TM}\left( P_{k},supp\left( \mu _{n}^{k}\right)
\right) <\frac{1}{r}.$ This is an absurd.

As the map $\left( \xi ,c\right) \in \Gamma ^{\infty }\left( M\right) \times
H^{1}\left( M;%
\mathbb{R}
\right) \longmapsto {\widetilde{\mathcal{N}}}_{c}\left( L+\xi \right) $ is
upper-semicontinuous and ${\widetilde{\mathcal{M}}}_{c_{n}}\left( L+\sigma
\right) ={\widetilde{\mathcal{N}}}_{c_{n}}\left( L+\sigma \right) $ for
every $\sigma \in \mathcal{O}_{n},$ we can consider the neighborhood $%
A_{n,r}\left( \sigma \right) $ of $\left( \sigma ,c_{n}\right) $ in $\Gamma
^{\infty }\left( M\right) \times H^{1}\left( M;%
\mathbb{R}
\right) $ such that ${\widetilde{\mathcal{N}}}_{c}\left( L+\xi \right)
\subset $ $\mathcal{V}_{n,r}\left( \sigma \right) $ for every $\left( \xi
,c\right) \in A_{n,r}\left( \sigma \right) $ and for each $\sigma \in
\mathcal{O}_{n}.$ Let us take the subset open $B_{n,r}=\bigcup\limits_{%
\sigma \in \mathcal{O}_{n}}A_{n,r}\left( \sigma \right) \subset \Gamma
^{\infty }\left( M\right) \times H^{1}\left( M;%
\mathbb{R}
\right) ,U_{n,r}=\pi _{1}\left( B_{n,r}\right) $ and $V_{n,r}=\pi _{2}\left(
B_{n,r}\right) .$ Hence $U_{n,r}\supset \mathcal{O}_{n}$ and%
\begin{equation*}
{\widetilde{\mathcal{M}}}_{c}\left( L+\xi \right) \subset {\widetilde{%
\mathcal{A}}}_{c}\left( L+\xi \right) \subset {\widetilde{\mathcal{N}}}%
_{c}\left( L+\xi \right) \subset \mathcal{V}_{n,r}\left( \sigma \right)
,\forall \left( \xi ,c\right) \in U_{n,r}\times V_{n,r}.
\end{equation*}%
Let us take $\mathcal{O}^{\prime \prime }=\bigcap\limits_{n,r}U_{n,r},$ $%
\mathcal{O}=\mathcal{O}^{\prime }\cap \mathcal{O}^{\prime \prime }$ (given
by Corollary \ref{coro2}) and $\mathcal{G=}\bigcap\limits_{r}\left(
\bigcup\limits_{n}V_{n,r}\right) .$ Observe that $\bigcup\limits_{n}V_{n,r}$
is a open and dense subset of $H^{1}\left( M;%
\mathbb{R}
\right) .$

Now let us to show that
\begin{equation*}
{\widetilde{\mathcal{M}}}_{c}\left( L+\xi \right) ={\widetilde{\mathcal{A}}}%
_{c}\left( L+\xi \right) ={\widetilde{\mathcal{N}}}_{c}\left( L+\xi \right)
,\forall \left( \xi ,c\right) \in \mathcal{O}\times \mathcal{G}\text{.}
\end{equation*}%
Indeed, let us take $R\in {\widetilde{\mathcal{N}}}_{c}\left( L+\xi \right) $
and an integer $k>0.$ As $c\in \mathcal{G}$ we can find $m\in
\mathbb{N}
$ such that $c\in V_{m,2k}$ (Neighborhood of $c_{m}$)$.$ Since $\xi \in
\mathcal{O}$ we have $\xi \in U_{m,2k}.$ Hence $\left( \xi ,c\right) \in
B_{m,2k}$ and there exists $\sigma _{m}\in \mathcal{O}_{m}$ such that $%
\left( \xi ,c\right) \in A_{m,2k}\left( \sigma _{m}\right) ,$that is%
\begin{equation}
{\widetilde{\mathcal{M}}}_{c}\left( L+\xi \right) \subset {\widetilde{%
\mathcal{A}}}_{c}\left( L+\xi \right) \subset {\widetilde{\mathcal{N}}}%
_{c}\left( L+\xi \right) \subset \mathcal{V}_{m,2k}\left( \sigma _{m}\right)
.  \label{eq3}
\end{equation}%
Let $Q\in {\widetilde{\mathcal{M}}}_{c_{m}}\left( L+\sigma _{m}\right) $ be
the minimum point:%
\begin{equation*}
d_{TM}\left( R,{\widetilde{\mathcal{M}}}_{c_{m}}\left( L+\sigma _{m}\right)
\right) =d_{TM}\left( R,{Q}\right) ,
\end{equation*}%
and $S\in {\widetilde{\mathcal{M}}}_{c}\left( L+\xi \right) $ the minimum
point:%
\begin{equation*}
d_{TM}\left( Q,{\widetilde{\mathcal{M}}}_{c}\left( L+\xi \right) \right)
=d_{TM}\left( Q,{S}\right) .
\end{equation*}%
It follows from \ref{eq2} and \ref{eq3} that%
\begin{eqnarray*}
d_{TM}\left( R,{\widetilde{\mathcal{M}}}_{c}\left( L+\xi \right) \right)
&\leq &.d_{TM}\left( R,S\right) \leq d_{TM}\left( R,Q\right) +d_{TM}\left(
Q,S\right) \\
&=&d_{TM}\left( R,{\widetilde{\mathcal{M}}}_{c_{m}}\left( L+\sigma
_{m}\right) \right) +d_{TM}\left( Q,{\widetilde{\mathcal{M}}}_{c}\left(
L+\xi \right) \right) \\
&<&\frac{1}{2k}+\frac{1}{2k}=\frac{1}{k}.
\end{eqnarray*}%
Since this holds for any $k>0$ and ${\widetilde{\mathcal{M}}}_{c}\left(
L+\xi \right) $ is compact, we conclude that $R\in {\widetilde{\mathcal{M}}}%
_{c}\left( L+\xi \right) .$
\end{proof}

\section{Hyperbolic periodic orbit for a perturbed exact magnetic Lagrangian
\label{section2}}

The main goal of this section is to prove Theorem \ref{teorema2}. Before we
must prove the following proposition that holds for Tonelli Lagrangians. The
idea of the proof is to use the results obtained by G. Contreras and R.
Iturriaga in \cite{gon5} on the index forms. Let $\Omega _{T}$ be the set of
continuous piecewise $C^{2}$ vectorfields $\xi $ along a curve $\gamma |_{%
\left[ 0,T\right] }.$ The \textit{index form }on $\Omega _{T}$ is defined by%
\begin{equation}
I\left( \xi ,\zeta \right) =\int_{0}^{T}\left( L_{vv}\left( \dot{\xi},\dot{%
\zeta}\right) +L_{vx}\left( \dot{\xi},\zeta \right) +L_{xv}\left( \xi ,\dot{%
\zeta}\right) +L_{vv}\left( \xi ,\zeta \right) \right) dt.  \label{eq10}
\end{equation}%
For more details on this form see \cite{gon5}, Section 4.

\begin{proposition}
\label{propostion a2}Let $L$ be a Tonelli Lagrangian and $c$ be a cohomology
class with $\alpha \left( c\right) >e_{0}.$ Let us suppose that ${\widetilde{%
\mathcal{M}_{c}}}\left( L\right) $ has a unique minimizing measure supported
on a periodic orbit. Then there exists a $C^{\infty }$ 1-form $\eta $
(sufficiently close to zero) such that the perturbed Lagrangian $L+\eta -c$
has a unique minimizing measure supported on a hyperbolic periodic orbit $%
\Gamma $. Moreover the stable and unstable manifolds of $\Gamma $ intersect
transversally $W^{s}\left( \Gamma \right) \pitchfork W^{u}\left( \Gamma
\right) $.
\end{proposition}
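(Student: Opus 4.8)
The plan is to reduce the problem to a normal form by Proposition~\ref{proposicao1} and then to force hyperbolicity by a local perturbation that modifies the index form~(\ref{eq10}) in the directions transverse to the orbit, following the index-form analysis of Contreras and Iturriaga in~\cite{gon5}. A measure supported on a single periodic orbit is automatically uniquely ergodic, so the hypothesis of Proposition~\ref{proposicao1} is met; applying it with the periodic orbit $\Gamma_0 = \mathrm{supp}(\mu_0)$ produces a $C^\infty$ 1-form $\eta_0$ arbitrarily close to zero with
\[
{\widetilde{\mathcal{M}}}_c(L+\eta_0) = {\widetilde{\mathcal{A}}}_c(L+\eta_0) = {\widetilde{\mathcal{N}}}_c(L+\eta_0) = \Gamma_0 .
\]
Replacing $L$ by $L+\eta_0$, I may assume the Mather, Aubry and Mañé sets all coincide with $\Gamma_0$. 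By the upper-semicontinuity of the Mañé set (Proposition~\ref{propa1}), any sufficiently small further 1-form perturbation keeps ${\widetilde{\mathcal{N}}}_c$ inside a prescribed tubular neighborhood of $\Gamma_0$.

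The heart of the argument is to make $\Gamma_0$ hyperbolic. I would look for the perturbing 1-form in the class $\eta_x(v) = \langle Y(x), v\rangle$ with $Y$ supported in a tubular neighborhood of $\pi(\Gamma_0)$ and $Y \equiv 0$ along $\pi(\Gamma_0)$. Two features make this class convenient. Since the Euler--Lagrange contribution of $\eta$ is the Lorentz force $i_{\dot\gamma}\,dY$, which depends only on $dY$, one can arrange $i_{\dot\gamma}\,dY = 0$ along $\Gamma_0$, so that $\Gamma_0$ remains a solution at the same energy; moreover $\int \eta\, d\mu_0 = 0$, so the action of $\mu_0$ is unchanged and $\Gamma_0$ stays minimizing. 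On the other hand, adding $\eta$ leaves $L_{vv}$ untouched but alters the entries $L_{vx}$ and $L_{xx}$ of the index form~(\ref{eq10}) through the first and second transverse derivatives of $Y$ along $\Gamma_0$, which are still free. Since $\Gamma_0$ is a minimizer, the periodic index form is $\geq 0$; by choosing these transverse derivatives appropriately I would strengthen this to strict positivity on the periodic vector fields $I$-orthogonal to $\dot\gamma$, which by the index-form criterion of~\cite{gon5} separates the stable and unstable (Green) subbundles and makes $\Gamma_0$ hyperbolic. By the local isolation of a hyperbolic orbit together with Proposition~\ref{propa1}, the minimizing measure then remains unique and supported on $\Gamma := \Gamma_0$.

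Finally, to obtain $W^s(\Gamma) \pitchfork W^u(\Gamma)$ I would invoke a Kupka--Smale theorem for this perturbation class---Miranda~\cite{jan1} for surfaces and Arbieto--Castro~\cite{arb1} in arbitrary dimension---applied to one further arbitrarily small 1-form perturbation. A hyperbolic periodic orbit and its local minimizing behaviour persist under small perturbations, so such a perturbation preserves both hyperbolicity and the uniqueness of the minimizing measure on $\Gamma$ while rendering the stable and unstable manifolds transversal. Summing the 1-forms from the three steps yields the desired $\eta$.

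I expect the main obstacle to be the second step: guaranteeing that the freedom in the transverse derivatives of $dY$ along $\Gamma_0$ is rich enough to realize the index-form modification forcing hyperbolicity, while \emph{simultaneously} respecting the constraints $Y|_{\Gamma_0}=0$ and $i_{\dot\gamma}\,dY|_{\Gamma_0}=0$ that keep $\Gamma_0$ a minimizing solution. Unlike the potential perturbations of~\cite{gon5}, which modify only $L_{xx}$, a 1-form couples $L_{vx}$ and $L_{xx}$; verifying that this coupled modification still surjects onto the symplectic adjustments of the linearized return map needed to separate the Green bundles---and that competing minimizers cannot escape the tubular neighborhood---is the delicate point.
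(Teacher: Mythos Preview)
Your high-level strategy---preserve $\Gamma$ as the unique minimizer, then strengthen the index form in transverse directions to force hyperbolicity via~\cite{gon5}---matches the paper's. But the obstacle you flag at the end is precisely the gap in your proposal: you do not exhibit a 1-form that realizes the required index-form modification while keeping $\Gamma$ a minimizing solution, and the abstract freedom-counting you sketch does not produce one without further work.

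The paper closes this gap with a single explicit perturbation that performs both roles at once. In tubular coordinates $(x_1,\dots,x_n)$ about $\pi(\Gamma)$ with $\pi(\Gamma)=\{x_2=\dots=x_n=0\}$, it takes
\[
\eta_x(v)=\lambda(x)\,\omega_x(v),\qquad \omega_x(v)=\bigl(L_v(x,X(x))-L_v(x,0)\bigr)(v),
\]
where $X$ is the vector field from Lemma~\ref{lema1} and $\lambda(x)=\tfrac{\varepsilon}{2}f(x)(x_2^2+\dots+x_n^2)$ vanishes \emph{quadratically} on $\pi(\Gamma)$. Because both $\lambda$ and $d\lambda$ vanish on $\pi(\Gamma)$, one gets $\tilde L_{vv}=L_{vv}$ and $\tilde L_{vx}=L_{vx}$ along $\Gamma$, so the $L_{vx}$--$L_{xx}$ coupling you worry about simply disappears: only $L_{xx}$ is modified, exactly as in the potential case of~\cite{gon5}. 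The surviving term is $d_x^2\lambda(\cdot,\cdot)\,\omega_\gamma(\dot\gamma)$, and here the hypothesis $\alpha(c)>e_0$ enters through Lemma~\ref{lema1}, which gives $\omega_\gamma(\dot\gamma)\ge K>0$. Hence $\tilde L_{xx}-L_{xx}$ is $\varepsilon K$ times the identity on the normal bundle, yielding $\tilde I_T(\xi^T,\xi^T)\ge I_T(\xi^T,\xi^T)+\delta$ uniformly. That $\Gamma$ remains the unique minimizer needs no separate argument: this $\eta$ is exactly of the form~(\ref{eq7}) used in the proof of Proposition~\ref{proposicao1}, so that proposition already guarantees $\widetilde{\mathcal M}_c(L+\eta)=\Gamma$.

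Two further points. Your proposal never invokes $\alpha(c)>e_0$; without it Lemma~\ref{lema1} fails and there is no sign control on the $L_{xx}$ modification. And for transversality the paper does not appeal to the Kupka--Smale results of~\cite{jan1,arb1}---those concern magnetic flows perturbed in the space of $2$-forms, not arbitrary Tonelli Lagrangians perturbed by $1$-forms---but rather repeats the same index-form perturbation argument from~\cite{gon5}, Section~5, along the branches of $W^s(\Gamma)$ and $W^u(\Gamma)$.
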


\begin{proof}
We can consider $c=0.$ Let $\Gamma $ the minimizing periodic orbit in ${%
\widetilde{\mathcal{M}}}\left( L\right) .$ By the graph property, $\pi
|_{\Gamma }:\Gamma \rightarrow M,\pi \left( x,v\right) =x$ is injective, so $%
\pi \left( \Gamma \right) \subset M$ is a simple closed curve. We consider
coordinates on a tubular neighborhood of $\pi \left( \Gamma \right) $ in the
following way: $\varphi :U\rightarrow S^{1}\times
\mathbb{R}
^{n-1},\varphi =\varphi \left( x_{1},...,x_{n}\right) $ with $\varphi \left(
\Gamma \right) =S^{1}\times \left\{ 0\right\} $ and $\left\{ \frac{\partial
}{\partial x_{1}},\frac{\partial }{\partial x_{2}},...,\frac{\partial }{%
\partial x_{n}}\right\} $ is an orthonormal frame over the points of $\pi
\left( \Gamma \right) =\gamma .$

Given $\varepsilon >0,$we take the $C^{\infty }$ function $\lambda :$ $%
M\rightarrow
\mathbb{R}
,$ given by \ref{eq6} in the proof of Proposition \textit{\ref{proposicao1}}%
, as%
\begin{equation}
\lambda \left( x\right) =\left\{
\begin{array}{l}
0,\text{ on }\pi \left( \Gamma \right) \\
\frac{\varepsilon }{2}f\left( x\right) \left(
x_{2}^{2}+x_{3}^{2}+...+x_{n}^{2}\right) \text{ on }B\setminus \pi \left(
\Gamma \right) \\
0\text{ on }M\setminus B%
\end{array}%
\right. ,
\end{equation}%
where $B\subset U$ and $f$ is a non-negative bump function with support
contained in $B$ and which is one on a small neighborhood of $\pi \left(
\Gamma \right) .$ Then by Proposition \textit{\ref{proposicao1}} and
Equation \ref{eq7}, the 1-form
\begin{equation*}
\eta \left( x,v\right) =\eta _{x}\left( v\right) =\lambda \left( x\right)
\left( L_{v}\left( x,X\left( x\right) \right) -L_{v}\left( x,0\right)
\right) \left( v\right)
\end{equation*}%
is such that
\begin{equation*}
{\widetilde{\mathcal{M}}}\left( L+\eta \right) ={\widetilde{\mathcal{M}}}%
\left( L\right) =\Gamma .
\end{equation*}%
Note that $\eta $ can be made $C^{\infty }$ arbitrarily small. Now we define
$\tilde{L}=L+\eta .$ In order to aplly the index form \ref{eq10}, let us
calculate the derivatives $\tilde{L}_{vv},\tilde{L}_{vx}\ $and $\tilde{L}%
_{xx}$ on $\Gamma :$ we have $\tilde{L}_{vv}=L_{vv}$ and since $\partial
_{v}\eta \left( x,v\right) \left( h\right) =\eta _{x}\left( h\right) =\eta
\left( x,h\right) \forall h\in T_{x}M,$ we conclude that
\begin{equation*}
\tilde{L}_{vx}\left( h,k\right) =L_{vx}\left( h,k\right) +\left[ \partial
_{x}\eta \left( x,h\right) \right] \left( k\right) .
\end{equation*}

By taking the 1-form
\begin{equation*}
\omega \left( x,v\right) =\omega _{x}\left( v\right) =\left( L_{v}\left(
x,X\left( x\right) \right) -L_{v}\left( x,0\right) \right) \,\left( v\right)
,
\end{equation*}
we have $\eta _{x}=\lambda \left( x\right) \omega _{x}$. Then%
\begin{equation*}
\left[ \partial _{x}\eta \left( x,h\right) \right] \left( k\right) =d\lambda
\left( x\right) \left( k\right) \omega \left( x,h\right) +\lambda \left(
x\right) \left[ \partial _{x}\omega \left( x,h\right) \right] \left(
k\right) .
\end{equation*}%
In coordinates we have
\begin{equation}
d_{x}\lambda \left( x\right) \left( k\right) =\frac{1}{2}\varepsilon \left(
x_{2}^{2}+x_{3}^{2}+...+x_{n}{}^{2}\right) \left[ d_{x}f\left( x\right)
\left( k\right) \right] +f\left( x\right) \varepsilon \left(
0,x_{2},x_{3},...,x_{n}\right) \left( k\right) .  \label{eq12}
\end{equation}%
Since $\lambda |_{\pi \left( \Gamma \right) }=0$ we conclude that $%
d_{x}\lambda |_{\pi \left( \Gamma \right) }=0$ and $\partial _{x}\eta |_{\pi
\left( \Gamma \right) }=0.$ Therefore $\tilde{L}_{vx}=L_{vx}$ on $\Gamma .$

It ramains to calculate $\tilde{L}_{xx}.$ Observe that%
\begin{eqnarray*}
\tilde{L}_{xx}\left( x,v\right) \left( h,k\right) &=&L_{xx}\left( x,v\right)
\left( h,k\right) +\partial _{xx}\eta \left( x,v\right) \left( h,k\right) \\
&=&L_{xx}\left( x,v\right) \left( h,k\right) +d_{x}^{2}\lambda \left(
x\right) \left( h,k\right) \omega \left( x,v\right) +d_{x}\lambda \left(
x\right) \left( h\right) \partial _{x}\omega \left( x,v\right) \left(
k\right) \\
&&+d_{x}\lambda \left( x\right) \left( k\right) \partial _{x}\omega \left(
x,v\right) \left( h\right) +\lambda \left( x\right) \partial _{xx}\omega
\left( x,v\right) \left( h,k\right) .
\end{eqnarray*}%
Hence,
\begin{equation*}
\tilde{L}_{xx}\left( x,v\right) \left( h,k\right) =L_{xx}\left( h,k\right)
+d_{x}^{2}\lambda \left( x\right) \left( h,k\right) \omega \left( x,v\right)
\text{ on }\pi \left( \Gamma \right) .
\end{equation*}

Now it follows from \ref{eq12} that
\begin{equation*}
d_{x}^{2}\lambda \left( x\right) \left( h,k\right) =f\left( x\right)
\varepsilon \left[
\begin{array}{cc}
0 & 0 \\
0 & I%
\end{array}%
\right] \left( h,k\right) \text{ on }\pi \left( \Gamma \right) ,
\end{equation*}%
where $I$ is the identity matrix $\left( n-1\right) \times \left( n-1\right)
.$

Therefore, on $\Gamma ,$ we have $\tilde{L}_{vv}=L_{vv},\tilde{L}%
_{vx}=L_{vx}\ $and%
\begin{equation*}
\text{ }\tilde{L}_{xx}\left( x,v\right) \left( h,k\right) =L_{xx}\left(
h,k\right) +f\left( x\right) \varepsilon \left[
\begin{array}{cc}
0 & 0 \\
0 & I%
\end{array}%
\right] \left( h,k\right) \left( L_{v}\left( x,X\left( x\right) \right)
-L_{v}\left( x,0\right) \right) \left( v\right) .
\end{equation*}%
Now it is possible to compare the index of the original and the perturbed
lagrangian along the solution. Let $\tilde{I}_{T}$ and $I_{T}$ be the index
forms on $\left[ 0,T\right] $ for $\tilde{L}$ and $L,$ respectvely. Fix $%
\theta \in \Gamma $ and define $N\left( \theta \right) =\left\{ w\in T_{\pi
\left( \theta \right) }M|\left\langle w,\dot{\gamma}\right\rangle =0\right\}
.$ Hence $N\left( \theta \right) $ is generated by the vectors $\frac{%
\partial }{\partial x_{2}},...,\frac{\partial }{\partial x_{n}}.$ Denote $%
\pi _{N}\left( \xi _{1},\xi _{2},...,\xi _{n}\right) =\left( \xi
_{2},...,\xi _{n}\right) .$ Since the next steps of proof hold for Tonelli
Lagrangians in general, even in our case, they are entirely analogous to
proof of Theorem D in \cite{gon5} (Section 5, page 934). Therefore it is
suffices to proof that there exists $\delta >0,$ such that $\tilde{I}%
_{T}\left( \xi ^{T},\xi ^{T}\right) \geq I_{T}\left( \xi ^{T},\xi
^{T}\right) +\delta ,$ for certain vectors $\xi ^{T}$ satisfying $\left\vert
\pi _{N}\left( \xi ^{T}\left( t\right) \right) \right\vert >\frac{1}{2}$ for
every $0\leq t\leq \lambda $ and $T>T_{0}$. Indeed, in the coordinates $%
\left( x_{1},...,x_{n},\frac{\partial }{\partial x_{2}},...,\frac{\partial }{%
\partial x_{n}}\right) $ on $TU$ we have that%
\begin{eqnarray*}
\tilde{I}_{T}\left( \xi ^{T},\xi ^{T}\right) &=&\int_{0}^{T}\left( \tilde{L}%
_{vv}\left( \dot{\xi}^{T},\dot{\xi}^{T}\right) +2\tilde{L}_{vx}\left( \dot{%
\xi}^{T},\xi ^{T}\right) +\tilde{L}_{xx}\left( \xi ^{T},\xi ^{T}\right)
\right) dt \\
&=&\int_{0}^{T}\left( L_{vv}\left( \dot{\xi}^{T},\dot{\xi}^{T}\right)
+2L_{vx}\left( \dot{\xi}^{T},\xi ^{T}\right) +L_{xx}\left( \xi ^{T},\xi
^{T}\right) \right) dt \\
&&+\int_{0}^{T}\left( f\left( \gamma \right) \varepsilon \left[
\begin{array}{cc}
0 & 0 \\
0 & I%
\end{array}%
\right] \left( \xi ^{T},\xi ^{T}\right) \left( L_{v}\left( \gamma ,X\left(
\gamma \right) \right) -L_{v}\left( \gamma ,0\right) \right) \left( \dot{%
\gamma}\right) \right) dt \\
&\geq &I_{T}\left( \xi ^{T},\xi ^{T}\right) +\int_{0}^{T}\varepsilon
\sum_{i=2}^{n}\left( \xi _{i}^{T}\right) ^{2}Kdt\geq I_{T}\left( \xi
^{T},\xi ^{T}\right) +\varepsilon \frac{K\lambda }{4},
\end{eqnarray*}%
because $\left( L_{v}\left( \gamma ,X\left( \gamma \right) \right)
-L_{v}\left( \gamma ,0\right) \right) \left( \dot{\gamma}\right) \geq K$ (by
Lemma \ref{lema1}). Therefore, by taking $\delta =\frac{K\lambda }{4}$ we
obtain that $\Gamma $ is a hyperbolic periodic orbit for the Lagrangian $%
L+\eta .$ Now we must prove by perturbing $L+\eta $, if necessary, that the
stable and unstable manifolds intersect transversally $W^{s}\left( \Gamma
\right) \pitchfork W^{u}\left( \Gamma \right) .$ Actually, using similar
steps as above, the proof follows from the same arguments as in the proof of
Theorem D in \cite{gon5} (Section 5, page 934).
\end{proof}

Finally we can conclude the proof of Theorem \ref{teorema2} stated in
Introduction.

\begin{proof}
\textit{(of Theorem \ref{teorema2})} We aplly Theorem \ref{teo2} for $c=0$
to deduce that there exists a residual subset $\mathcal{O}$ of $\Gamma
^{\infty }\left( M\right) $ such that for any $\omega \in \mathcal{O}$, the
Lagrangian $L+\omega $ has a unique minimizing measure and this measure is
uniquely ergodic$.$ Let $\mathcal{A}$ be the subset of $\mathcal{O}$ of
1-forms for which the measure on $\mathfrak{M}\left( L+\omega \right) $ is
supported on a periodic orbit. Let $\mathcal{A}_{1}$ be the subset of $%
\mathcal{A}$ on which the minimizing periodic orbit is hyperbolic and its
stable and unstable manifolds intersect transversally $W^{s}\left( \Gamma
\right) \pitchfork W^{u}\left( \Gamma \right) $. The proof that $\mathcal{A}%
_{1}$ is relatively open on $\mathcal{A}$ and the final step are entirely
analogous to proof of Theorem D in \cite{gon5}. We repeat the final step
here only for the sake of completeness.

Let $\mathcal{U}$ be an open subset of $\Gamma ^{\infty }\left( M\right) $
such that $\mathcal{A}_{1}=\mathcal{A\cap U}$. Let $\mathcal{B}:=\mathcal{%
O\diagdown A}$. Since for an exact magnetic Lagrangian we have $\alpha
\left( c\right) >e_{0}$ for every cohomology class $c$ (see \cite{pat1},
Corollary 5.1), we can use Proposition \ref{propostion a2} to conclude that $%
\mathcal{A}_{1}$ is dense in $\mathcal{A}$. Therefore $\mathcal{A}_{1}\cup
\mathcal{B}$ is generic in $\Gamma ^{\infty }\left( M\right) .$ Let $%
\mathcal{V}=int\left( \Gamma ^{\infty }\left( M\right) \mathcal{\diagdown U}%
\right) .$ Hence $\mathcal{U\cup V}$ is an open and dense in $\Gamma
^{\infty }\left( M\right) .$ Since $\mathcal{A\subset }\overline{\mathcal{A}%
_{1}}\subset \overline{\mathcal{U}}$ we have $\mathcal{A\cap V\subset }%
\overline{\mathcal{U}}\mathcal{\cap V=\emptyset }$, that is $\mathcal{A\cap
V=\emptyset }$. Moreover $\mathcal{O=A\cup B}$ is generic and%
\begin{eqnarray*}
\left( \mathcal{U\cup V}\right) \cap \left( \mathcal{A\cup B}\right) &=&%
\left[ \left( \mathcal{U\cup V}\right) \mathcal{\cap A}\right] \cup \left[
\left( \mathcal{U\cup V}\right) \cap \mathcal{B}\right] \\
&=&\left( \mathcal{U}\cap \mathcal{A}\right) \mathcal{\cup }\left[ \left(
\mathcal{U\cup V}\right) \cap \mathcal{B}\right] \\
&\subset &\mathcal{A}_{1}\mathcal{\cup B}\text{.}
\end{eqnarray*}%
This shows that $\mathcal{A}_{1}\mathcal{\cup B}$ is generic in $\Gamma
^{\infty }\left( M\right) $.
\end{proof}

\section{Acknowledgements} 
I am grateful to Mário Jorge Dias Carneiro and José Antônio Gonçalves Miranda for several helpful comments and suggestions. I thank also to FAPEMIG-BRAZIL which supported partially this work.

\addcontentsline{toc}{section}{Bibliografia}

\end{document}